\title{A class of trees determined by their chromatic symmetric functions}
\author[a,1]{Yuzhenni  Wang
}
\author[b,2]{Xingxing Yu}
\author[$*$,a,1]{Xiao-Dong Zhang}
\affil[a]{School of Mathematical Sciences, MOE-LSC,  SHL-MAC,
	Shanghai Jiao Tong University, Shanghai 200240, P. R. China
}
\affil[b]{School of Mathematical Sciences, Georgia Institute of Technology, Atlanta, GA 30332-0160, USA
}
\newenvironment {Proof of 1}{\noindent {\bf Proof of Theorem \ref{result1}.}}{\hfill\ensuremath{\square}}
\date{}
\newtheorem{thm}{Theorem}[section]
\newtheorem{lem}[thm]{Lemma}
\newtheorem{coro}[thm]{Corollary}
\newtheorem{prop}[thm]{Proposition}
\newtheorem{conj}[thm]{Conjecture}
\newtheorem*{rem}{Remark}
\newtheorem{cla}{Claim}[]
\begin{document}
	\maketitle
	\renewcommand{\thefootnote}{}
	\footnote{*Corresponding author. }
	\footnote{E-mail address:
		wangyuzhenni@sjtu.edu.cn(Y. Wang),  yu@math.gatech.edu(X. Yu),  xiaodong@sjtu.edu.cn(X.-D. Zhang)
	}
	\footnote{$^1$Partly supported by the National Natural Science Foundation of China (Nos. 11971311,12371254, 12161141003) and Science and Technology Commission of Shanghai Municipality (No. 22JC1403600),  National Key R\&D Program of China under Grant No. 2022YFA1006400 and the Fundamental Research Funds for the Central Universities.
	}
	
	\begin{abstract}
		Stanley introduced the concept of chromatic symmetric functions of graphs which extends and refines the notion of chromatic polynomials of graphs, and asked whether trees are determined up to isomorphism by their chromatic symmetric functions.  
		Using the technique of differentiation with respect to power-sum symmetric functions, we give a positive answer to  Stanley's question for the class of trees with exactly two vertices of degree at least 3. 
		In addition, we prove that for any tree $T$, the generalized degree sequence for subtrees of $T$ is determined by the chromatic symmetric function of $T$, providing evidence to a conjecture of Crew.
		
	\end{abstract}
	\section{Introduction}

	We use  $\mathbb{Z}^+$ to denote the set of positive integers.
	For $n\in\mathbb{Z}^+$, define $[n]=\{1,\dots,n\}$.
	Let $G$ be a graph possibly with  loops or multiple edges.
	Let $V(G)$ and $E(G)$ denote the vertex set and edge
	set of $G$, respectively. Sometimes, we write $(V(G),
	E(G))$ for  $G$, and write $|G|$ for $|V(G)|$, and call it the {\it order} of $G$. For $S\subseteq E(G)$, we use $G-S$ to denote the graph $(V(G), E(G)\setminus S)$. For $S\subseteq V(G)$, we use $G[S]$ to denote the subgraph of $G$ induced by $S$,
	and  we use $G-S$ to denote  $G[V(G)\setminus S]$.  For two graphs $G$ and $H$, we write  $G\cong H$ for
	``$G$ is isomorphic to $H$''.

	A proper coloring of a graph $G$ is a function $\kappa: V(G) \rightarrow \mathbb{Z}^+$ such that $\kappa(v) \neq \kappa(w)$ whenever $vw\in E(G)$. Stanley (\cite{Stanley1995}; see also \cite[pp. 462--464]{Stanley1999}) defined the {\it chromatic symmetric function} of $G$ as
	$$X_G = X_G(x_1,x_2,\dots) = \sum_{\kappa}\prod_{v\in V(G)} x_{\kappa(v)},
	$$
	where $x_1,x_2,\dots$ are commuting variables, and
	the summation is taken  over all proper colorings $\kappa$ of $G$.
	Note that if there is a loop in $G$ then $X_G = 0$ (as in this case $G$ admits no proper coloring), and that $X_G$ remains the same if we replace multiple edges with a single edge.
	Also note that $X_G$ is invariant  under permutations of ${x_i}$;  so $X_G$ is a symmetric function, homogeneous of degree $|G|$. By definition, if $G$ is a graph with components $G_1,\dots,G_m$,
	then $$X_{G}=\prod_{i=1}^m X_{G_i}.$$
	
	By letting  $x_1 =\dots=x_k =1$ and $x_i =0$ for all $i>k$ in $X_G$, we obtain the  {\it chromatic polynomial} of $G$ in $k$, denoted by  $\chi_G(k)$, which counts the number of  proper colorings of $G$ using at most $k$ colors. For any tree $T$ on $n$ vertices,  $\chi_T(k)=k(k-1)^{n-1}$. Thus, the chromatic polynomial does not determine the isomorphism types of trees; however, Stanley  \cite{Stanley1995} asked whether chromatic symmetric functions do.
	It was proved that some classes of trees are determined by the chromatic symmetric functions, so Stanley's question is often formulated as a conjecture.
	
	\begin{conj}[Stanley's tree problem, 1995]\label{conj}
		For two trees $T$ and $F$, $X_{T}=X_{F}$ if
		and only if $T\cong F$.
	\end{conj}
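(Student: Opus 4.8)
The forward implication is immediate: an isomorphism $T\to F$ induces a bijection between proper colorings preserving each monomial $\prod_{v}x_{\kappa(v)}$, so $X_T=X_F$ whenever $T\cong F$. All the content lies in the reverse implication, and the plan is to show that $X_T$ determines a sufficiently rich family of combinatorial invariants of $T$ to reconstruct it up to isomorphism.

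My starting point would be Stanley's power-sum expansion $X_T=\sum_{S\subseteq E(T)}(-1)^{|S|}p_{\lambda(S)}$, where $\lambda(S)$ records the orders of the components of the forest $(V(T),S)$. Reading off the coefficient of each $p_\lambda$ shows that $X_T$ knows, for every partition $\lambda$ of $n=|T|$, the number of edge subsets that split $T$ into subtrees of prescribed orders. This already recovers coarse invariants (the number of edges, the number of leaves, and in fact the full degree sequence), but these are far too weak to separate non-isomorphic trees.

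To extract finer data I would use differentiation with respect to the power sums. The operators $\partial/\partial p_k$ act on the power-sum basis as derivations that remove one part equal to $k$ from each monomial (with the appropriate multiplicity), and applied to $X_T$ they convert global partition counts into \emph{localized} subtree statistics: iterating such differentiations peels off subtrees of controlled order and boundary. The key technical step is to prove an identity expressing an iterated derivative $\partial X_T/\partial p_{\bullet}$ as a signed generating function over subtrees weighted both by their order and by the number of edges joining them to the rest of $T$. Establishing such an identity would show that $X_T$ determines the \emph{generalized degree sequence} for subtrees of $T$ --- the multiset of pairs (order, boundary size) ranging over all subtrees --- which is precisely Crew's conjectured invariant and the main intermediate goal.

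The final and hardest step is reconstruction: showing that the generalized degree sequence, possibly together with a few further invariants pulled out by the same differentiation machinery, pins down $T$ up to isomorphism. This is where the general conjecture remains open, since it is not known that these invariants form a complete set. For structurally rigid families the reconstruction becomes tractable: a tree with exactly two vertices of degree at least $3$ is a path between two branch vertices with disjoint legs (paths) hanging off each, and the subtree statistics decompose along those legs. I expect the principal obstacle to be disentangling the two branch vertices --- the chromatic symmetric function superposes the contributions of both leg-multisets and of the connecting path, so one must argue that the combined (order, boundary) data nonetheless separates which legs sit at which branch vertex and recovers the distance between them. Overcoming this separation problem, rather than the extraction of invariants, is the crux.
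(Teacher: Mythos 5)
The statement you set out to prove is a conjecture, and it is still open; the paper does not prove it, and neither does your proposal. Your outline correctly reproduces the paper's toolkit: Stanley's power-sum expansion, the differential operators $\partial/\partial p_k$ (the paper's Lemma 2.1, from Stanley's Corollary 2.11), and the idea that such derivatives convert $X_T$ into localized subtree statistics. Indeed, the identity you call "the key technical step" is exactly what the paper proves as Theorem 4.2, expressing $F_T(x,y)=\sum_H x^{|H|}y^{d(V(H))}$ (sum over subtrees $H$ of $T$) as an explicit linear functional of $X_T$. But at the decisive point, the reconstruction of $T$ from these invariants, you write only that this "is the crux" and that "the general conjecture remains open." A proposal that stops at the crux is a research plan, not a proof, and the gap is not a fixable oversight: nobody knows how to carry out the reconstruction in general, which is precisely why the statement is labelled a conjecture. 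What the paper actually establishes is the special case of trees with exactly two vertices of degree at least $3$ (its Theorem 1.2), and there the reconstruction consumes the bulk of the work: Lemma 3.1 handles trunks of order two via the weighted deletion-contraction identity (Corollary 2.5); Lemma 3.2 shows a forest of two equal-order spiders is determined by $X_{T_1}^{(3)}+X_{T_2}^{(3)}$ together with path and twig sequences; and the proof of Theorem 1.2 applies $\partial/\partial p_k$ for carefully chosen $k$ to the identity from Corollary 2.5 to extract $X_{T_m}=X_{F_m}$ (or $X_{T_1}+X_{T_m}=X_{F_1}+X_{F_m}$), followed by a case analysis on whether the side trees are paths or spiders. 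None of this separation argument, which you correctly identify as the principal obstacle, appears in your proposal.

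A secondary inaccuracy: you equate "the multiset of pairs (order, boundary size) ranging over all subtrees" with "precisely Crew's conjectured invariant." It is not. Crew's generalized degree sequence is the multiset of triples $(|W|,e(W),d(W))$ over \emph{all} vertex subsets $W\subseteq V(T)$, not only those inducing subtrees; the paper's Theorem 4.2 proves the subtree restriction only and is explicitly offered as evidence toward Crew's conjecture, not a resolution of it. So even your intermediate goal, as stated, overclaims what the differentiation machinery is known to deliver.
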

	
	Note that  Conjecture~\ref{conj} need not hold for graphs containing cycles.  The two unicyclic graphs in Figure \ref{fig1} are not isomorphic but share the same chromatic symmetric function.  More such pairs can be found in  Orellana and Scott  \cite{Orellana2014}.
	\begin{figure}[ht]
		\centering
		\begin{tabular}{ccc}
			\scalebox{1}{
				\begin{tikzpicture}
					\foreach \x in {0,1,2}\fill(\x,0) circle (.08);
					\foreach \x in {0,1,2}\fill(\x,1) circle (.08);
					\draw [line width=0.6pt](0,0)--(0,1)--(1,0)--(1,1)--(2,1);
					\draw[line width=0.6pt](0,0)--(2,0);
			\end{tikzpicture}}
			&&\scalebox{1}{
				\begin{tikzpicture}
					\foreach \x in {0,1,2}\fill(\x,0) circle (.08);
					\foreach \x in {0,1,2}\fill(\x,1) circle (.08);
					\draw[line width=0.6pt] (0,1)--(0,0)--(1,1)--(1,0);
					\draw[line width=0.6pt](1,1)--(2,1);
					\draw[line width=0.6pt](0,0)--(2,0);
			\end{tikzpicture}}
		\end{tabular}
		\caption[]{\label{fig1}Two unicyclic graphs with the same chromatic symmetric function}
	\end{figure}
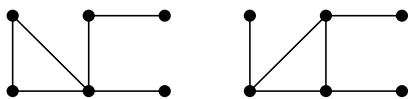

	Conjecture \ref{conj} holds for all trees with up to 29 vertices  \cite{Heil2019}, as well as for certain classes of trees
	\cite{Aliste-Prieto2017,Aliste-Prieto2014,Chmutov2020,Loebl2019}.
	In particular, Martin, Morin, and Wagner \cite{Martin2008} proved that Conjecture \ref{conj} holds for {\it spiders}, trees with exactly one vertex of degree at least 3.
	We show in this paper that Conjecture \ref{conj} holds for trees with exactly two vertices of degree at least 3.
	
	\begin{thm}\label{result1}
		Suppose $T$ is a tree with exactly two vertices of degree at least 3. If $F$ is a graph with $X_F=X_T$, then $F\cong T$.
	\end{thm}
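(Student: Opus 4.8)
My plan is to first pin down the coarse structure of $F$ from standard power-sum invariants, and then to reconstruct the fine metric data of the two leg-systems by repeatedly differentiating $X_T$ with respect to the power sums. Throughout I use Stanley's expansion $X_G=\sum_{S\subseteq E(G)}(-1)^{|S|}p_{\lambda(S)}$, where $\lambda(S)$ records the component sizes of the spanning subgraph $(V(G),S)$. Reading off coefficients of $X_T=X_F$ already gives a lot: the coefficient of $p_{(2,1^{n-2})}$ equals $-|E|$, so $F$ has $n-1$ edges; and the coefficient of $p_n$ is a signed count of connected spanning subgraphs, which is $(-1)^{n-1}\neq0$ for the tree $T$, so $F$ must be connected. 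Hence $F$ is a tree on $n$ vertices. Next, for each $m$ the coefficient of $p_{(m,1^{n-m})}$ equals $(-1)^{m-1}N_m$, where $N_m$ is the number of $m$-vertex subtrees of $F$; combined with the determination of subtree degree sequences (the companion result quoted in the abstract, which isolates subtrees of a fixed shape such as the stars $K_{1,d}$), this fixes the degree sequence of $F$. In particular $F$ has exactly two vertices of degree at least $3$, with the same degrees $s+1,t+1$ and the same number $s+t$ of leaves as $T$; that is, $F$ is again a ``double spider.'' It remains to show that the central-path length $c$ and the two multisets of leg lengths $A=\{a_1,\dots,a_s\}$ and $B=\{b_1,\dots,b_t\}$ are determined, up to the symmetry that swaps the two centres.

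The engine for this is the differentiation identity
\[
\frac{\partial}{\partial p_k}X_T=(-1)^{k-1}\!\!\sum_{\substack{A\subseteq V(T),\,|A|=k\\ T[A]\text{ connected}}}\!\! X_{T-A},
\]
which follows from the power-sum expansion because, for a tree, any part of size $k$ in $\lambda(S)$ is a $k$-vertex subtree spanned by its $k-1$ internal edges. Applying such operators and then specializing the resulting symmetric functions (for instance by extracting further power-sum coefficients) converts $X_T$ into a family of one-variable generating functions counting subtrees subject to local constraints. The crucial quantities are the generating functions for subtrees meeting a branch vertex: a subtree containing a centre chooses an independent initial segment along each incident arm, so these generating functions are products $\prod_i\frac{1-q^{a_i+1}}{1-q}$ and $\prod_j\frac{1-q^{b_j+1}}{1-q}$, times a factor accounting for how far the central path is entered. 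Note that any subtree which itself has a vertex of degree at least $3$ must have that vertex at $u$ or $v$, so these branch-centred statistics are genuinely recoverable from the CSF.

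From here the reconstruction mirrors, and extends, the spider argument of Martin--Morin--Wagner \cite{Martin2008}: factoring a product $\prod_i(1-q^{a_i+1})$ into its cyclotomic pieces recovers the leg multiset at a centre. When the two branch degrees differ ($s\neq t$) one can separate the contributions of the two centres according to the degree of the centre appearing in the subtree, recover $A$ and $B$ individually, and then read off $c$ from the generating function of subtrees that contain \emph{both} centres, which carries the contribution of the full central path.

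I expect the main obstacle to be the symmetric case $s=t$, where the two centres have equal degree and cannot be distinguished by any single-centre statistic, so that a priori distinct assignments of legs to centres, or distinct values of $c$, might produce identical coarse counts; indeed the plain distance distribution already fails to separate such pairs. Overcoming this requires genuinely two-vertex information, namely counts of subtrees that straddle both centres, obtained by iterating $\partial/\partial p_k$ to delete a subtree at one centre and then analysing the CSF of the remaining spider via \cite{Martin2008}. Assembling enough of these independent identities to force the triple $(c;A,B)$ is the technical heart of the argument.
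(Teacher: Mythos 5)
Your opening moves are sound: extracting the coefficients of $p_n$ and $p_{(2,1^{n-2})}$ to show $F$ is an $n$-vertex tree, pinning down the degree sequence (so $F$ is again a double spider with the same two branch degrees), and the differentiation identity $\partial X_T/\partial p_k=(-1)^{k-1}\sum_H X_{T-V(H)}$ are all correct, and they are essentially the same tools the paper uses (Proposition \ref{prop-path} and Lemma \ref{prop3}). From that point on, however, the proposal is a plan rather than a proof, and the plan has a genuine gap exactly where the difficulty lies. Your key assertion --- that the per-centre generating functions $\prod_i(1-q^{a_i+1})/(1-q)$ and $\prod_j(1-q^{b_j+1})/(1-q)$ are ``genuinely recoverable from the CSF'' --- is not justified. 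What the CSF yields (via the subtree polynomial, via coefficients of $p_{(m,1^{n-m})}$, or via iterated differentiation) are aggregated counts in which the contributions of the two centres, and of subtrees straddling both centres, are summed together; your observation that any subtree with a vertex of degree at least $3$ must have that vertex at $u$ or $v$ gives only this sum, never the two factors separately. Your proposed separation by the local degree of the branch vertex also fails in general, since a subtree rooted at the high-degree centre may use only $3$ of its legs and is then indistinguishable, in such counts, from one rooted at the other centre. Disentangling the two centres is the entire content of the theorem, and you concede it yourself: ``assembling enough of these independent identities to force the triple $(c;A,B)$ is the technical heart of the argument.'' A proof cannot stop where its technical heart begins.

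For comparison, the paper resolves this by a mechanism your plan does not contain. Since $X_T=X_F$ forces equal twig sequences and trunk lengths (Lemma \ref{lem-trunk}), the weighted contractions of the two trunks satisfy $(T/P,(1_{V(T)})_{T/P})\cong(F/Q,(1_{V(F)})_{F/Q})$, so the deletion--contraction identity (Corollary \ref{coro-de}) cancels them and yields $\sum_{\emptyset\neq I\subseteq E(P)}(-1)^{|I|-1}X_{T-I}=\sum_{\emptyset\neq J\subseteq E(Q)}(-1)^{|J|-1}X_{F-J}$. Differentiating this identity with respect to $p_k$ for $k=|T_1|+m-1$ annihilates every term except those coming from deleting a terminal trunk edge, producing the clean splitting $X_{T_m}=X_{F_m}$ (or $X_{T_1}+X_{T_m}=X_{F_1}+X_{F_m}$ in the symmetric case $|T_1|=|T_m|$). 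The pieces are then handled by the spider result together with a separate, nontrivial lemma (Lemma \ref{lem-2spider}) showing that a disjoint union of two equal-order spiders is determined by $X^{(3)}_{T_1}+X^{(3)}_{T_2}$ plus the path and twig sequences --- this is the system-of-equations argument that actually does the work your cyclotomic-factorization step presupposes. Finally, the mixed cases where one side is a spider and the other a path (in particular $|T_1|=|T_m|+1$, where $T_1\cong F_m+f_m$ and $T_m+e_m\cong F_1$ is a live possibility) require a further differentiation and a path-counting contradiction that your outline does not anticipate. In short: your coarse setup and choice of tools match the paper, but the separation of the two leg systems is missing, so the proposal does not establish the theorem.
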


	We prove Theorem~\ref{result1} by applying techniques from previous work  \cite{Crew2022,Crew2020,Martin2008,Stanley1995} and by considering expansions of chromatic symmetric functions.
	In Section 2,  we discuss useful properties of chromatic symmetric functions and their weighted version, and ways of expressing $X_G$ as a linear combination of chromatic symmetric functions of certain graphs obtained from  $G$.
	In Section 3, we complete the proof of Theorem \ref{result1}.
	In Section 4, we prove a result motivated by a conjecture of Crew concerning the generalized degree sequences of trees.

	\section{Preliminaries}
	Recall that, for a graph $G$, $X_G$ is a homogeneous symmetric function of degree $|G|$. For $n\in \mathbb{Z}^+$, let $\varLambda_n$ consist of all symmetric functions that are homogeneous of degree $n$.
	Then  $\varLambda_n$ is a vector space over the rationals $\mathbb{Q}$.
	
	For $n\in \mathbb{Z}^+$, a partition of $n$ is a sequence $\lambda=(\lambda_1,\dots,\lambda_l)$ of positive integers such that $\sum_{i=1}^l\lambda_i=n$ and  $\lambda_1\ge\dots\ge\lambda_l$,  often written as $\lambda\vdash n$.  
	For a partition $\lambda\vdash n$,  the length of $\lambda$ is the number of parts in $\lambda$,  denoted as $l(\lambda)$.
	For $k\in \mathbb{Z}^+$, the $k$-th power-sum symmetric function is
	$$p_k=\sum_{i\ge 1}x_i^k,$$
	where $x_1,x_2,\dots$ are commuting variables. For a partition $\lambda=(\lambda_1\dots,\lambda_l)$, let
	$$p_{\lambda}=p_{\lambda_1}\dots p_{\lambda_l}.$$

	It is well-known that $\{p_{\lambda}:\lambda\vdash n\}$ is a basis for the $\mathbb{Q}$-vector space $\varLambda_n$. Thus, there exist $c_\lambda(G)\in \mathbb{Q}$ such that
	$$X_G=\sum_{\lambda\vdash n}c_{\lambda}(G)p_\lambda.$$
	Stanley \cite[Theorem 2.5]{Stanley1995} proved that for any graph $G$,
	\begin{equation}\label{eq2}
		X_G=\sum_{A\subseteq E(G)}(-1)^{|A|}p_{\pi(A)},
	\end{equation}
	where $\pi(A)$ is the partition of $|G|$ whose parts are the orders of the components of the graph $(V(G),A)$.

	If $F$ is an $n$-vertex forest, then $l(\pi(A))+|A|=n$ for every subset $A\subseteq E(F)$.  Thus by \eqref{eq2},
	\begin{equation*}
		c_\lambda(F)=(-1)^{n-l(\lambda)}|\{A\subseteq E(F) : \pi(A)=\lambda\}|.
	\end{equation*}
	Hence, if $F$ is an $n$-vertex forest with $j$ components, then
	\begin{equation*}
		\sum_{\lambda : l(\lambda)=k}c_\lambda(F)=(-1)^{n-k}\binom{n-j}{k-j}.
	\end{equation*}

	A useful tool for studying symmetric functions is differentiation.
	By \eqref{eq2}, we may regard $X_G$ as a polynomial in $p_1,\dots,p_{|G|}$, but not in $x_1,x_2,\dots$. Let $\partial/\partial p_j$ denote the formal partial derivative of $X_G$ with respect to $p_j$.
	Stanley \cite[Corollary 2.11]{Stanley1995} proved that for any graph $G$ and any  $j\in\mathbb{Z}^+$,
	\begin{equation*}
		\frac{\partial X_G}{\partial p_j}=\sum_H\mu_H X_{G-V(H)},
	\end{equation*}
	where the summation is taken  over all connected $j$-vertex induced subgraphs $H$  of $G$, and $\mu_H$ is the coefficient of $n$ in the chromatic polynomial $\chi_H(n)$.
	Thus,   $\mu_H=(-1)^{j-1}$ for any $j$-vertex tree $H$
     (because  $\chi_H(n)=n(n-1)^{j-1}$), and we have the following
	\begin{lem}\label{prop3}
		For any forest $F$,
		\begin{equation*}
			\frac{\partial X_F}{\partial p_j}=(-1)^{j-1}\sum_H X_{F-V(H)},
		\end{equation*}
		where the summation is taken over all $j$-vertex subtrees $H$ of $F$.
	\end{lem}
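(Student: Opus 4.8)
The plan is to derive this as an immediate specialization of the formula for general graphs quoted just above, namely Stanley's result that $\partial X_G/\partial p_j=\sum_H\mu_H X_{G-V(H)}$, with the sum ranging over all connected $j$-vertex induced subgraphs $H$ of $G$ and $\mu_H$ the coefficient of $n$ in $\chi_H(n)$. Setting $G=F$, the two things I need to pin down are (i) which subgraphs $H$ actually appear in the sum when $F$ is a forest, and (ii) the value of $\mu_H$ for each of them.

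For (i), I would argue that when $F$ is a forest the connected $j$-vertex induced subgraphs are exactly the $j$-vertex subtrees. Indeed, any induced subgraph $F[S]$ of a forest is acyclic, so if it is connected then it is a tree; conversely, given a $j$-vertex subtree $H$ with vertex set $S=V(H)$, we have $H\subseteq F[S]$, so $F[S]$ is connected, and being a connected acyclic graph it coincides with its unique spanning tree $H$. Hence $H\mapsto V(H)$ is a bijection between the $j$-vertex subtrees of $F$ and the size-$j$ vertex subsets $S$ for which $F[S]$ is connected, which is precisely the index set of Stanley's sum.

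For (ii), each such $H$ is a $j$-vertex tree, so $\chi_H(n)=n(n-1)^{j-1}$ and the coefficient of $n$ is $\mu_H=(-1)^{j-1}$, a value independent of the particular $H$. Substituting this constant into Stanley's formula and pulling it out of the summation gives $\partial X_F/\partial p_j=(-1)^{j-1}\sum_H X_{F-V(H)}$, with the sum taken over all $j$-vertex subtrees $H$ of $F$, as claimed.

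There is essentially no serious obstacle here: all the real content sits in the general formula already cited, and the only point requiring care is the identification in (i), which is a routine consequence of $F$ having no cycles. I would just be sure to read $F-V(H)$ as the induced subgraph $F[V(F)\setminus V(H)]$, consistent with the notation fixed in the introduction, so that each term $X_{F-V(H)}$ matches the corresponding term in Stanley's expression.
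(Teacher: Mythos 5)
Your proposal is correct and follows exactly the paper's own route: the paper also obtains the lemma by specializing Stanley's formula $\partial X_G/\partial p_j=\sum_H\mu_H X_{G-V(H)}$ to forests and noting that $\mu_H=(-1)^{j-1}$ since every connected ($j$-vertex induced) subgraph of a forest is a tree with $\chi_H(n)=n(n-1)^{j-1}$. The only difference is that you spell out the bijection between $j$-vertex subtrees and vertex sets inducing connected subgraphs, which the paper leaves implicit; this is a harmless (and welcome) addition of detail.
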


Let $\text{Sym}_{\mathbb{Q}}$ denote the ring of the chromatic symmetric functions of all simple connected graphs.
Tsujie \cite[Corollary 2.4]{Tsujie2018} stated  that $X_G$ is irreducible in $\text{Sym}_{\mathbb{Q}}$ if and only if $G$ is connected, which also follows from  \cite[Theorem 5]{Cho2016}. As a consequence of this result, we have the following lemma.
\begin{lem}\label{thm-forest}
	Let $m,r\in \mathbb{Z}^+$. Let $T$ be a forest with components $T_1, \dots, T_m$ and $F$ be a forest with components  $F_1, \dots, F_r$. Suppose $X_T=X_F$.
	Then $m=r$, and there exists a permutation $\tau$ of $[m]$ such that $X_{T_i}=X_{F_{\tau(i)}}$ for $i\in [m]$.
\end{lem}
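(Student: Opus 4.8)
The plan is to read this as a unique-factorization statement in the ring of symmetric functions. First I would invoke the multiplicativity over components recorded in the introduction, $X_G=\prod_i X_{G_i}$, to write
$$X_T=\prod_{i=1}^m X_{T_i},\qquad X_F=\prod_{j=1}^r X_{F_j}.$$
Since each $T_i$ and each $F_j$ is a tree, hence connected, Tsujie's theorem quoted above guarantees that every factor $X_{T_i}$ and $X_{F_j}$ is irreducible in $\text{Sym}_{\mathbb{Q}}$. Thus the hypothesis $X_T=X_F$ displays one and the same element as a product of irreducibles in two ways.

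Next I would use that $\text{Sym}_{\mathbb{Q}}$ is a unique factorization domain. Concretely, the ring of symmetric functions over $\mathbb{Q}$ is the polynomial ring $\mathbb{Q}[p_1,p_2,\dots]$ in the power sums (the basis fact for $\{p_\lambda\}$ recorded above yields algebraic independence of the $p_k$), and a polynomial ring over a field is a UFD whose only units are the nonzero rationals. Applying unique factorization to the two expressions for $X_T=X_F$, I would conclude that $m=r$ and that there is a permutation $\tau$ of $[m]$ together with units $u_i$ (nonzero rationals) such that $X_{T_i}=u_i X_{F_{\tau(i)}}$ for all $i\in[m]$.

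Finally I would pin the $u_i$ down to $1$. Because $X_{T_i}$ is homogeneous of degree $|T_i|$ while $X_{F_{\tau(i)}}$ is homogeneous of degree $|F_{\tau(i)}|$ and $u_i$ has degree $0$, the relation forces $|T_i|=|F_{\tau(i)}|$; write $n_i$ for this common order. Expanding via \eqref{eq2}, the only $A$ with $\pi(A)=(1^{n_i})$ is $A=\emptyset$, so the coefficient of $p_1^{n_i}$ in $X_{T_i}$, and equally in $X_{F_{\tau(i)}}$, is exactly $1$. Comparing these coefficients in $X_{T_i}=u_i X_{F_{\tau(i)}}$ gives $u_i=1$, whence $X_{T_i}=X_{F_{\tau(i)}}$, as desired. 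The main obstacle is really the bookkeeping around the ambient ring: one must make sure that the ring in which Tsujie establishes irreducibility is identified with the polynomial ring $\mathbb{Q}[p_1,p_2,\dots]$ in which unique factorization is being applied, and one must remove the unit ambiguity that unique factorization inevitably leaves; once the ring is correctly identified as a UFD, the homogeneity and the normalization of the $p_1^{n_i}$-coefficient dispatch the remaining steps directly.
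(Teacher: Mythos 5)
Your proposal is correct and takes essentially the same route as the paper: the paper derives this lemma as a direct consequence of Tsujie's irreducibility criterion, with the unique-factorization argument in $\text{Sym}_{\mathbb{Q}}=\mathbb{Q}[p_1,p_2,\dots]$ left implicit. Your write-up simply fills in the details the paper omits, namely that this polynomial ring (in countably many variables) is a UFD and that the unit ambiguity is killed by comparing the coefficient of $p_1^{n_i}$, which equals $1$ by \eqref{eq2}.
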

	
	Next, we consider  subtree polynomials.
	For any tree $T$, denote by $L(T)$ the set of {\it leaf edges} of $T$ (i.e., edges of $T$ incident with leaves of $T$). 
	The {\it subtree polynomial} of $T$ is
	$$S_T =S_T(q,r)=\sum_{S}q^{|E(S)|}r^{|L(S)|},$$
	where the summation is taken over all subtrees $S$ of $T$.
	Let $s_T(i,j)$ denote the coefficient of $q^ir^j$ in $S_T$, which is the number of subtrees of $T$ with $i$ edges and $j$ leaf edges. Then $s_T(i,2)$ is the number of paths of length $i$ in $T$. It is observed in  \cite{Chaudhary1991} that
        $$\sum_{k\ge i}\binom{k}{i}(-1)^{i+k}s_T(k,k)$$
        is the number of vertices of degree $i$ in $T$.  
        For a graph $G$, the {\it degree sequence} of $G$ is the sequence $(d_1, d_2, \dots, d_{|G|-1})$ 
        where $d_i$ is the number of vertices of degree $i$ in $G$,
	and the  {\it path sequence} of $G$ is the sequence $(s_1, s_2, \dots, s_{|G|-1})$ where $s_i$ is the number of paths of length $i$  in $G$. 
	Thus, the degree sequence and path sequence of a tree can be determined by its  subtree polynomial.

	Martin, Morin, and Wagner \cite{Martin2008} proved that the subtree polynomial $S_T$ of a tree $T$ is
	determined by  $X_T$. 
    More generally, they obtained the following formula for $S_T$ in terms of the usual scalar product $\langle\cdot,\cdot\rangle$ on $\varLambda_n$ where $n=|T|$ (see \cite[\S7.9]{Stanley1999}):
	$$S_T(q,r)=\langle\Phi_n(q,r),X_T\rangle,$$
	where $\Phi_n(q,r)$	does not depend on $T$. Thus the linearity of the scalar product implies
	\begin{prop}\label{prop-path}
		The degree sequence and path sequence of any tree $T$  are determined by $S_{T}$, and hence also by $X_{T}$. 
		In general, the degree sequence and path sequence of any forest $F$ with components $T_1,\dots, T_m$ of the equal order are determined by $S_{T_1}+\dots+S_{T_m}$, and hence also by $X_{T_1}+\dots+X_{T_m}$.
	\end{prop}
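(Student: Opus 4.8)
The plan is to extract both sequences directly from the subtree polynomial and then invoke the Martin--Morin--Wagner identity $S_T=\langle\Phi_n(q,r),X_T\rangle$ to descend to $X_T$. For a single tree $T$ on $n$ vertices, the path sequence is immediate: the number of paths of length $i$ equals $s_T(i,2)$, the coefficient of $q^ir^2$ in $S_T$, so each $s_i$ is read off from $S_T$. For the degree sequence I would use the identity quoted above, namely that the number of vertices of degree $i$ equals $\sum_{k\ge i}\binom{k}{i}(-1)^{i+k}s_T(k,k)$, where each $s_T(k,k)$ is the coefficient of $q^kr^k$ in $S_T$; hence every $d_i$ is a fixed integer combination of coefficients of $S_T$. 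Both sequences are therefore determined by $S_T$. Since $\Phi_n(q,r)$ depends only on $n=|T|$ and not on $T$, the identity $S_T=\langle\Phi_n,X_T\rangle$ shows that $S_T$, and thus both sequences, are determined by $X_T$, completing the single-tree case.

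For the forest case, the first step is to identify the right generating function. Because $F$ is a disjoint union, every subtree of $F$ lies entirely within one component, and the leaf edges of a subtree $S$ are intrinsic to $S$ (they do not depend on the ambient graph). Summing the weight $q^{|E(S)|}r^{|L(S)|}$ over all subtrees of $F$ therefore yields exactly $S_{T_1}+\dots+S_{T_m}$. Consequently the coefficient of $q^ir^2$ in this sum counts the paths of length $i$ in $F$, and $\sum_{k\ge i}\binom{k}{i}(-1)^{i+k}\big(\sum_{j}s_{T_j}(k,k)\big)$ counts the vertices of degree $i$ in $F$; both formulas follow by summing the single-tree formulas over the components. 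Thus the degree and path sequences of $F$ are determined by $S_{T_1}+\dots+S_{T_m}$.

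It remains to pass from $\sum_j S_{T_j}$ to $\sum_j X_{T_j}$, and this is where the equal-order hypothesis is essential. Since each component $T_j$ has the same order $n$, the same polynomial $\Phi_n(q,r)$ appears in every identity $S_{T_j}=\langle\Phi_n,X_{T_j}\rangle$, so linearity of the scalar product gives
$$\sum_{j=1}^m S_{T_j}=\sum_{j=1}^m\langle\Phi_n,X_{T_j}\rangle=\Big\langle\Phi_n,\ \sum_{j=1}^m X_{T_j}\Big\rangle,$$
which exhibits $\sum_j S_{T_j}$ as a function of $\sum_j X_{T_j}$ alone. Combined with the previous paragraph, this proves the claim. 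The argument is largely bookkeeping built on results already established; the only points that genuinely require care are the observation that leaf-edge counts are intrinsic to each subtree (so that subtree weights add cleanly across components) and the role of the equal-order assumption, which is precisely what allows the single index $n$ in $\Phi_n$ to be factored out of the sum.
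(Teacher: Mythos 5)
Your proposal is correct and follows essentially the same route as the paper: reading the path sequence from the coefficients $s_T(i,2)$, the degree sequence from the Chaudhary--Gordon identity $\sum_{k\ge i}\binom{k}{i}(-1)^{i+k}s_T(k,k)$, and then using the Martin--Morin--Wagner formula $S_T=\langle\Phi_n(q,r),X_T\rangle$ together with linearity of the scalar product (which is exactly where the equal-order hypothesis enters, so that a single $\Phi_n$ serves all components). The paper's own justification is precisely this observation, stated more tersely in the discussion preceding the proposition.
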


	Martin, Morin, and Wagner \cite{Martin2008} also proved that spiders are determined by their subtree polynomials, so Conjecture \ref{conj} holds for spiders.
	
	For a tree $T$, the {\it trunk} is defined to be its smallest subtree that contains all vertices of degree at least 3 in $T$. 
	So the trunk of a spider consists of a single vertex.  
	For every leaf $v$ of $T$, define its {\it twig}  to be the longest path in $T$ containing $v$ such that every internal vertex of $P$ has degree 2 in $T$.
	A path of $T$ is called a {\it twig} if it is a twig for one of the leaves of $T$. For a forest $F$,
	a twig of $F$ is a twig of some component of $F$.
	Define the {\it twig sequence} of a forest $F$ to be $(\tau_1,\tau_2,\dots,\tau_{|T|-1})$, where $\tau_i$ denotes the number of twigs of length $i$ in $F$.
	Crew \cite{Crew2022} generalized an argument in \cite{Martin2008} and proved the following.

	\begin{lem}\cite[Theorem 3]{Crew2022}\label{lem-trunk}
		For any tree $T$, the order of its trunk and the twig sequence are determined by  $S_T$, and thus also by $X_{T}$. 
		In particular, if $T$ is a spider then $T$ is determined by $X_T$.  
	\end{lem}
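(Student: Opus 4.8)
The plan is to reduce everything to the two–variable subtree polynomial $S_T(q,r)$, which is available to us: by the result of Martin, Morin and Wagner quoted above, $S_T$ is determined by $X_T$, and hence so are all the coefficients $s_T(i,j)$, in particular the path sequence $\{s_T(i,2)\}_i$ and, by Proposition~\ref{prop-path}, the degree sequence of $T$. The degree sequence tells us the number of branch vertices (vertices of degree at least $3$), and when $T$ has none it identifies $T$ as a path, a case I would dispose of directly since a path is determined up to isomorphism by its degree sequence. So I would assume $T$ has at least one branch vertex and argue that $S_T$ pins down the twig sequence $(\tau_1,\tau_2,\dots)$; the order of the trunk then comes essentially for free, as explained next.

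First I would record the structural identity that lets the trunk order follow from the twig sequence. Every vertex not on the trunk has degree at most $2$ and lies in a unique twig: deleting the trunk's vertices leaves a disjoint union of paths, and re‑attaching each to the trunk recovers a twig, so the twigs partition the off–trunk vertices. A twig of length $i$ contributes its leaf together with its $i-1$ internal degree–$2$ vertices, i.e. $i$ off–trunk vertices. Writing $t$ for the order of the trunk, this gives
\begin{equation*}
	|T| = t + \sum_{i\ge 1} i\,\tau_i .
\end{equation*}
Since $|T|$ is the degree of the homogeneous symmetric function $X_T$, the order of the trunk is determined as soon as the twig sequence is. Thus the whole lemma reduces to showing that $S_T$ determines $(\tau_i)$.

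For the twig sequence I would mimic and then generalize the spider computation of Martin, Morin and Wagner, working with the generating function of path (and, more generally, subtree) counts. The idea is to classify each subtree—and in particular each path of $T$—by how it meets the trunk: a path decomposes as a (possibly empty) twig segment, followed by a trunk segment, followed by another twig segment, so that the path polynomial is assembled from the twig–length data together with the multiset of trunk distances between attachment points. One then tries to isolate the purely twig–theoretic part of this generating function, using the leaf–edge variable $r$ to mark the ends of subtrees and, where needed, a peeling argument that recovers the longest twigs first from the extremal (longest) subtree counts. The main obstacle is exactly this separation: the subtree counts entangle the twig lengths with the unknown trunk distances, and disentangling them is the technical heart of the argument. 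In the spider case the trunk collapses to a single vertex, all trunk distances vanish, and the twig lengths (the leg lengths) are read off directly; this is the base case supplied by \cite{Martin2008}, and the work is to carry the separation through an arbitrary trunk.

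Finally I would deduce the two stated consequences. The order of the trunk is $|T|-\sum_i i\,\tau_i$ by the identity above. For the spider statement, suppose $X_F=X_T$ with $T$ a spider. By \eqref{eq2} the number of vertices and the number of edges of a graph are read off from its chromatic symmetric function, and by the irreducibility criterion of Tsujie \cite{Tsujie2018} stated above, $X_F=X_T$ forces $F$ to be connected; a connected graph on $|T|$ vertices with $|T|-1$ edges is a tree, so $F$ is a tree with $S_F=S_T$. Hence $F$ has trunk of order $1$ and the same twig sequence as $T$, i.e. $F$ is a spider whose multiset of leg lengths equals that of $T$. Since a spider is determined up to isomorphism by this multiset, $F\cong T$.
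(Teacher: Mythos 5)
Your proposal reduces the lemma correctly in two directions but leaves its core unproven. The identity $|T| = t + \sum_{i\ge 1} i\,\tau_i$ is correct (once $T$ has a vertex of degree at least $3$, the components of $T$ minus the trunk are exactly the twigs minus their attachment vertices), so the trunk order does follow from the twig sequence and $|T|$; and your deduction of the spider statement from the twig-sequence claim is sound (connectivity of $F$ from Tsujie's irreducibility criterion --- equivalently one could use the girth argument of Martin et al.\ that this paper invokes elsewhere --- then $F$ is a tree with $S_F=S_T$, hence a spider with the same leg multiset). But the assertion that $S_T$ determines $(\tau_i)$, which is the entire substance of the lemma, is only sketched: you propose to classify subtrees by how they meet the trunk and to separate the ``purely twig-theoretic part'' from the unknown trunk distances by a peeling argument, and you yourself flag this separation as ``the technical heart of the argument'' without carrying it out. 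A strategy plus an acknowledgment of the main obstacle is not a proof; as written you have established only the conditional statement ``if $S_T$ determines the twig sequence, then it also determines the trunk order, and spiders are determined by $X_T$.''

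For comparison: the paper itself offers no proof of this lemma --- it is quoted from Crew \cite[Theorem 3]{Crew2022}, who proves the twig-sequence claim by generalizing the Martin--Morin--Wagner spider computation through explicit coefficients $s_T(i,j)$ of the subtree polynomial, which is precisely the step you defer. The paper also records a second route after Theorem \ref{thm-delete}: the polynomial $F_T(x,y)=\sum_H x^{|H|}y^{d(V(H))}$ is an explicit linear image of $X_T$, and by \cite[Lemma 26]{Crew2020-1} it determines the trunk order and twig sequence. To complete your argument you must either carry out the separation you describe (in effect reproving Crew's theorem) or cite one of these results for the twig-sequence claim; with that input, your two reductions do finish the lemma, and the reduction of the trunk order to the twig sequence is a genuine, if small, simplification, since it derives one half of the cited conclusion from the other.
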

	
	Crew and Spirkl \cite{Crew2020} extended the concept of chromatic symmetric functions to vertex-weighted graphs, which enabled them to extend the deletion-contraction formula for chromatic polynomials to chromatic symmetric functions.
	Relevant results can be found in \cite{Aliste-Prieto2021,CS2022}.
	
	A weighted graph $(G,w)$ consists of a graph $G$ (possibly with multiple edges and loops) and a weight function $w:V(G)\rightarrow\{0\}\cup \mathbb{Z}^+$. 
	For any $U\subseteq V$, define $w(U)=\sum_{u\in U}w(u)$. 
	Two weighted graphs $(G,w)$ and $(H,w^\prime)$ are isomorphic if there is an isomorphism from $G$ to $H$, $\sigma: V(G) \to V(H)$, such that $w'(\sigma(v))=w(v)$ for all $v\in V(G)$.  
	Crew and Spirkl \cite{Crew2020} introduced the following generalization of chromatic symmetric functions
	\[
	X_{(G,w)}=X_{(G,w)}(x_1,x_2,\dots)=\sum_{\kappa}\prod_{v\in V(G)}x_{\kappa(v)}^{w(v)},
	\]
	where the summation is taken over all proper colorings $\kappa$ of $G$, and $x_1,x_2,\dots$ are commuting variables. 
	Note that for any graph $G$,
	$X_{(G,1_V)}=X_G$, where $1_V(v)=1$ for all $v\in V(G)$.  Clearly, $X_{(G,w)}$ is invariant under permutations of ${x_i}$, so it is a symmetric function and homogeneous of degree $w(V(G))$.

	For an edge $e=u_1u_2$ of $(G,w)$, let $(G-e,w)$ be the graph obtained from $G$ by deleting $e$ and preserving weights of all the vertices. 
	If $e$ is a loop (i.e., $u_1=u_2$) then let $G/e=G-e$ and preserve weights of all the vertices. 
	If $e$ is not a loop, then let $G/e$ denote the graph obtained from $G$ by contracting $e$, i.e., deleting $e$ and identifying $u_1$ and $u_2$ into a single vertex $u$. 
	Let $w_{G/e}: V(G/e)\to \mathbb{Z}^+\cup \{0\}$ such that $w_{G/e}(v)=w(v)$ for all $v\in V(G/e)\setminus \{u\}$ and
	$$w_{G/e}(u)=w(u_1)+w(u_2). $$
	For any edge $e$ (that can be a loop) of a vertex-weighted graph $(G,w)$,  Crew and Spirkl \cite{Crew2020} derived the following deletion-contraction formula:
		\begin{equation*}
			X_{(G,w)}=X_{(G-e,w)}-X_{(G/e,w_{G/e})}.
		\end{equation*}

	Crew and Spirkl \cite{Crew2020} also proved the $p_{\lambda}$-basis expansion of $X_{(G,w)}$, which is a weighted version of \eqref{eq2}: 
	for any vertex-weighted graph $(G,w)$,
	\begin{equation*}
		X_{(G,w)}=\sum_{A\subseteq E}(-1)^{|A|}p_{\pi(G,w,A)},
	\end{equation*}
	where $x_1,x_2,\dots$ are commuting variables, and $\pi(G,w,A)$ is the partition of $w(V(G))$ whose parts are the total weights of the vertex sets of the components of the graph $(V(G),A)$.

	  For any weighted graph $(G,w)$ and any  $S=\{e_1,\dots,e_k\}\subseteq E(G)$ with $k\ge 2$, define
	  $$w_{G/S}\equiv w_{(G/\{e_1,\dots,e_{k-1}\})/e_k}\equiv\dots\equiv w_{G/e_1/\dots/e_{k-1}/e_k}.$$
	  We point out that $w_{G/S}$ does not depend on the order of the edges being contracted. By repeatedly applying  the deletion-contraction formula, one obtains the following 
	\begin{prop}\label{thm5}
		For a vertex-weighted graph $(G,w)$ and a non-empty set $S\subseteq E(G)$,
		\begin{equation*}
			X_{(G,w)}=\sum_{\emptyset\neq I\subseteq S}(-1)^{|I|-1}X_{({G-I},w)}+(-1)^{|S|}X_{({G/S},w_{G/S})}
		\end{equation*}
	\end{prop}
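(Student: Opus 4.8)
The plan is to recast Proposition~\ref{thm5} into a single inclusion--exclusion identity and prove it by induction on $|S|$, with the deletion--contraction formula serving as both the base case and the engine of the inductive step. Transposing the $I=\emptyset$ contribution, the stated formula is equivalent to
\begin{equation*}
	\sum_{I\subseteq S}(-1)^{|I|}X_{(G-I,w)}=(-1)^{|S|}X_{(G/S,w_{G/S})},
\end{equation*}
where the summand for $I=\emptyset$ is $X_{(G,w)}$ itself and, for $\emptyset\neq I\subseteq S$, the sign $(-1)^{|I|}$ equals $-(-1)^{|I|-1}$. I would first record this equivalence and then prove the displayed identity for every nonempty $S\subseteq E(G)$. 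For the base case $|S|=1$, say $S=\{e\}$, it reads $X_{(G,w)}-X_{(G-e,w)}=-X_{(G/e,w_{G/e})}$, which is exactly the Crew--Spirkl deletion--contraction formula.

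For the inductive step, fix $e\in S$ and set $S'=S\setminus\{e\}$. Splitting the subsets $I\subseteq S$ according to whether they contain $e$ (writing $I=\{e\}\cup I'$ with $I'\subseteq S'$ in the affirmative case) gives
\begin{equation*}
	\sum_{I\subseteq S}(-1)^{|I|}X_{(G-I,w)}=\sum_{I'\subseteq S'}(-1)^{|I'|}X_{(G-I',w)}-\sum_{I'\subseteq S'}(-1)^{|I'|}X_{((G-e)-I',w)}.
\end{equation*}
Applying the induction hypothesis once to $(G,w)$ and once to $(G-e,w)$, each with the edge set $S'$ of size $|S|-1$, the two inner sums collapse to $(-1)^{|S'|}X_{(G/S',w_{G/S'})}$ and $(-1)^{|S'|}X_{((G-e)/S',w_{(G-e)/S'})}$, so the right-hand side becomes $(-1)^{|S'|}\bigl(X_{(G/S',w_{G/S'})}-X_{((G-e)/S',w_{(G-e)/S'})}\bigr)$.

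It remains to identify this difference. Since $e\notin S'$, its image persists as an edge of $G/S'$, and applying deletion--contraction once more to that edge gives
\begin{equation*}
	X_{(G/S',w_{G/S'})}=X_{((G/S')-e,\,w_{G/S'})}-X_{((G/S')/e,\,w_{(G/S')/e})}.
\end{equation*}
Here I would invoke that deletion and contraction of distinct edges commute, so $(G/S')-e=(G-e)/S'$ as weighted graphs, and that contraction is order-independent, so $(G/S')/e=G/S$ with weight $w_{G/S}$; the difference above then equals $-X_{(G/S,w_{G/S})}$, and multiplying by $(-1)^{|S'|}$ produces $(-1)^{|S|}X_{(G/S,w_{G/S})}$, closing the induction. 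I expect the main obstacle to be precisely these weighted-graph identifications, namely checking that $(G/S')-e$ and $(G-e)/S'$ agree as weighted graphs and that $(G/S')/e$ and $G/S$ agree with matching weights, including the cases where contracting $S'$ turns $e$ into a loop or into parallel edges. That the deletion--contraction formula is stated for edges that may be loops, together with the already-noted fact that $w_{G/S}$ is independent of the order of contraction, is exactly what legitimizes these identifications.
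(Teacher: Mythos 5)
Your proof is correct, and it follows essentially the same route as the paper: the paper gives no written-out argument, merely asserting that the proposition follows ``by repeatedly applying the deletion-contraction formula,'' and your induction on $|S|$ --- base case the Crew--Spirkl formula, inductive step splitting subsets by membership of a fixed edge $e$ and using commutativity of deletion with contraction and order-independence of $w_{G/S}$ --- is precisely that repeated application made rigorous.
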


	  Proposition \ref{thm5} restricted to weight function $1_V$ states that 
	  for a simple graph $G$ and nonempty set $S\subseteq E(G)$, we have
	  \begin{equation*}
		X_G=\sum_{\emptyset \neq I\subseteq S}(-1)^{|I|-1}X_{G-I}+(-1)^{|S|}X_{({G/S},
			{(1_V)}_{G/S})}.
	  \end{equation*}
     Note that the term $X_{({G/S}, {(1_V)}_{G/S})}$, under certain conditions, could be eliminated by taking the difference of the chromatic symmetric functions of two graphs.
	For example, the following consequence of Proposition \ref{thm5} generalizes a result of Orellana and Scott \cite[Section 3] {Orellana2014}.

	\begin{coro}\label{coro-de}
		Let $G$ and $H$ be simple graphs, and let  $S\subseteq E(G)$ and $T\subseteq E(H)$ be nonempty, such that  $(G/S,(1_{V(G)})_{G/S})\cong (H/T,(1_{V(H)})_{H/T})$.
		Then
		\begin{equation*}
			X_{G}-X_{H}=
			\sum_{\emptyset \neq I\subseteq S}(-1)^{|I|-1}X_{G-I}
			-\sum_{\emptyset \neq J\subseteq T}(-1)^{|J|-1}X_{H-J}.
		\end{equation*}
	\end{coro}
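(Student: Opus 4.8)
The plan is to derive Corollary~\ref{coro-de} as a direct consequence of Proposition~\ref{thm5} by applying that proposition to each of the two graphs separately and then subtracting. First I would apply Proposition~\ref{thm5} to the simple graph $G$ with the nonempty edge set $S$, using the fact that for the all-ones weight function $1_{V(G)}$ we have $X_{(G,1_{V(G)})}=X_G$ and $X_{(G-I,1_{V(G)})}=X_{G-I}$ for each $I\subseteq S$; this yields
\begin{equation*}
	X_{G}=\sum_{\emptyset\neq I\subseteq S}(-1)^{|I|-1}X_{G-I}+(-1)^{|S|}X_{(G/S,(1_{V(G)})_{G/S})}.
\end{equation*}
Symmetrically, applying Proposition~\ref{thm5} to $H$ with the edge set $T$ gives the analogous identity with $G$, $S$, $I$ replaced by $H$, $T$, $J$.

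Next I would subtract the second identity from the first. The only remaining point is to show that the two contraction terms cancel, that is,
\begin{equation*}
	(-1)^{|S|}X_{(G/S,(1_{V(G)})_{G/S})}-(-1)^{|T|}X_{(H/T,(1_{V(H)})_{H/T})}=0.
\end{equation*}
For this I would invoke the hypothesis $(G/S,(1_{V(G)})_{G/S})\cong (H/T,(1_{V(H)})_{H/T})$, which immediately gives $X_{(G/S,(1_{V(G)})_{G/S})}=X_{(H/T,(1_{V(H)})_{H/T})}$ since chromatic symmetric functions are invariant under isomorphism of weighted graphs. It then remains to check that the sign factors agree, i.e.\ $(-1)^{|S|}=(-1)^{|T|}$. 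This follows because contracting a set of edges in a simple graph reduces the number of vertices by exactly the size of that set when the contracted subgraph is a forest on each component; more robustly, since the two contracted weighted graphs are isomorphic they have the same total weight and the same number of vertices, and for a simple graph the number of vertices of $G/S$ equals $|V(G)|-|S|$ precisely when $S$ is a forest. If $S$ (or $T$) contains a cycle the parity argument needs care, so I would either restrict attention to the forest case relevant to the application or argue the parity directly from the equality of vertex counts of the two isomorphic contractions together with $|V(G/S)|=|V(G)|-\mathrm{rank}(S)$.

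The main obstacle, therefore, is not the algebra of subtraction—that is routine linearity—but rather verifying that the signs $(-1)^{|S|}$ and $(-1)^{|T|}$ match so that the contraction terms truly cancel rather than merely combine. In the intended applications $S$ and $T$ will typically be chosen so that the contractions $G/S$ and $H/T$ collapse prescribed structures into single weighted vertices in a controlled way, and in those settings $|S|$ and $|T|$ can be read off directly and seen to have the same parity. I expect that once the sign matching is settled, the corollary follows by transcribing the two instances of Proposition~\ref{thm5} and cancelling, with no further computation required.
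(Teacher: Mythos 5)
Your proof follows exactly the route the paper intends: Corollary~\ref{coro-de} is stated there as a direct consequence of Proposition~\ref{thm5}, obtained by applying the proposition to $(G,S)$ and to $(H,T)$, subtracting, and cancelling the two contraction terms via the isomorphism hypothesis. So on the main line of argument you and the paper agree.

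The sign issue you flagged, however, is not a technicality to be waved off --- it is a genuine gap in the statement as written, and your instinct to restrict to the forest case is the correct repair. Concretely, take $G=K_3$ with $S=E(G)$ and $H$ a path on three vertices with $T=E(H)$. Both $(G/S,(1_{V(G)})_{G/S})$ and $(H/T,(1_{V(H)})_{H/T})$ are a single vertex of weight $3$, so the hypothesis holds, yet $(-1)^{|S|}=-1\neq 1=(-1)^{|T|}$, and indeed the claimed identity fails: one computes $X_G-X_H=-p_2p_1+p_3$, while the right-hand side equals
\begin{equation*}
\left(p_1^3-3p_2p_1+3p_3\right)-\left(p_1^3-2p_2p_1\right)=-p_2p_1+3p_3 .
\end{equation*}
Note also that your second proposed fix --- deducing the parity from vertex counts via $|V(G/S)|=|V(G)|-\mathrm{rank}(S)$ --- cannot work in general, since the isomorphism only forces $\mathrm{rank}(S)=\mathrm{rank}(T)$, which says nothing about the parity of $|S|$ and $|T|$ (as the example above shows). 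The clean hypothesis is that $S$ and $T$ are acyclic: then every contraction is of a non-loop edge, so $|V(G/S)|=|V(G)|-|S|$ and $|V(H/T)|=|V(H)|-|T|$; since contraction preserves total weight and isomorphic weighted graphs have equal total weight and equal vertex counts, the hypothesis gives $|V(G)|=|V(H)|$ and then $|S|=|T|$, so the contraction terms cancel with matching signs. This acyclicity holds in every application in the paper (single edges, and edge sets of trunk paths of trees), so the corollary as used is sound; but as stated for arbitrary nonempty $S$ and $T$ it needs either the acyclicity assumption or the explicit hypothesis $(-1)^{|S|}=(-1)^{|T|}$.
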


	Let $G$ be a graph with two incident edges $e_1=uv_1$ and $e_2=uv_2$ such that $v_1$ and $v_2$ are not adjacent.
	Applying Corollary \ref{coro-de} to $G$ and $H:=G-e_1+v_1v_2$ with $S=\{e_2\}$ and $T=\{e_2\}$,
	we obtain
	$$X_G-X_{G-e_1+v_1v_2}=X_{G-e_2}-X_{(G-e_1+v_1v_2)-e_2},
	$$
	which recovers Corollary 3.2 in \cite{Orellana2014}.
	One can also obtain  \cite[Proposition 3.1]{Aliste-Prieto2023} as a special case of Corollary \ref{coro-de}.

		\section{Trees  with path trunks}
		In this section, we consider trees  with exactly two
                vertices of degree at least 3.
                The trunks of these trees are paths, and their twigs are all attached to the ends of these paths. First, we consider the trees whose trunks have exactly two vertices.
		\begin{lem}\label{thm-2tr}
			Let  $T$ be a tree whose trunk has two vertices. If  $F$ is a graph with   $X_{F}=X_{T}$, then $F\cong T$.
		\end{lem}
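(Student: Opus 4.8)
The plan is to show that $F$ must be a \emph{double spider}: a tree with two adjacent vertices $u,v$ of degree at least $3$, each carrying a collection of attached paths (legs). First I would check that $F$ is a tree. Since $X_F=X_T$, specialization gives $\chi_F=\chi_T=k(k-1)^{n-1}$ with $n=|T|$, so $F$ has $n$ vertices, $n-1$ edges, and a single component (the multiplicity of the root $k=0$ is one). As $X_F$ is unchanged by replacing multiple edges with single edges and $X_F\neq 0$ rules out loops, we may assume $F$ is simple, hence a connected simple graph with $n$ vertices and $n-1$ edges, i.e. a tree. By Lemma \ref{lem-trunk} the tree $F$ has the same trunk order ($=2$) and the same twig sequence as $T$. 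A tree of trunk order $2$ is exactly a double spider, since its trunk is an edge whose endpoints are the only vertices of degree at least $3$ and every other vertex has degree at most $2$; its twigs are precisely its legs. Writing $A,B$ (resp.\ $A',B'$) for the multisets of leg lengths at the two branch vertices of $T$ (resp.\ $F$), equality of twig sequences gives $A\cup B=A'\cup B'$ as multisets.

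Next I would encode each leg-multiset by the generating function $g_A(x)=\sum_{p\ge 1}\bigl|\{a\in A: a\ge p\}\bigr|\,x^p$, and likewise $g_B,g_{A'},g_{B'}$. The coefficient sequence of $g_A$ is non-increasing and its successive differences count the legs of each length, so $g_A$ determines $A$ and conversely. The only consequence of $A\cup B=A'\cup B'$ that I will need is the additive identity $g_A+g_B=g_{A'}+g_{B'}$, as each side equals $\sum_{c\in A\cup B}\sum_{p=1}^{c}x^p$.

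The crux is to compute the path-length generating function $P_T(x)=\sum_{\ell\ge 1}(\text{number of length-}\ell\text{ paths of }T)\,x^\ell$, which by Proposition \ref{prop-path} is determined by $X_T$ and hence satisfies $P_T=P_F$. I would split the paths of $T$ according to whether they lie in the spider at $u$, lie in the spider at $v$, or cross the central edge $uv$. A direct count gives the crossing contribution $x(1+g_A)(1+g_B)=x\bigl(1+(g_A+g_B)+g_Ag_B\bigr)$, and shows that the within-spider contributions, written in generating-function form, consist of terms depending only on $A\cup B$ together with the single term $\tfrac12(g_A^2+g_B^2)$. Hence, subtracting the analogous expression for $F$, every $A\cup B$-invariant term cancels; using $g_A+g_B=g_{A'}+g_{B'}$ to write $g_A^2+g_B^2=(g_A+g_B)^2-2g_Ag_B$, I obtain the clean identity
\begin{equation*}
P_T(x)-P_F(x)=(x-1)\bigl(g_Ag_B-g_{A'}g_{B'}\bigr).
\end{equation*}

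Since $P_T=P_F$ and $x-1\neq 0$, this forces $g_Ag_B=g_{A'}g_{B'}$. Thus $\{g_A,g_B\}$ and $\{g_{A'},g_{B'}\}$ have the same sum and the same product, so both are the root multisets of the common quadratic $z^2-(g_A+g_B)z+g_Ag_B$ over $\mathbb{Q}[x]$, giving $\{g_A,g_B\}=\{g_{A'},g_{B'}\}$; as $g$ determines the leg-multiset, $\{A,B\}=\{A',B'\}$ and therefore $F\cong T$. The hard part will be the path enumeration itself: one must organize the three families of paths carefully and recognize that, after passing to generating functions, every contribution except the product $g_Ag_B$ is a symmetric function of the combined leg-multiset $A\cup B$. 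It is exactly this cancellation that produces the factor $(x-1)$ and reduces the entire problem to matching the sum and the product of two polynomials.
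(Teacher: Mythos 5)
Your proof is correct, and it takes a genuinely different route from the paper's. The two arguments share only the opening: establishing that $F$ is a tree (your chromatic-polynomial specialization is a valid alternative to the paper's citation of the girth result of Martin et al.) and invoking Lemma \ref{lem-trunk} to get trunk order $2$ and equal twig sequences. From there the paper works algebraically with the trunk edges $e\in E(T)$, $f\in E(F)$: equality of twig sequences gives $(T/e,(1_{V(T)})_{T/e})\cong (F/f,(1_{V(F)})_{F/f})$, so Corollary \ref{coro-de} yields $X_{T_1}X_{T_2}=X_{F_1}X_{F_2}$ for the components of $T-e$ and $F-f$; the irreducibility-based Lemma \ref{thm-forest} then matches the factors, $X_{T_i}=X_{F_i}$, and a case analysis (spider components handled by Lemma \ref{lem-trunk}, path components by twig-length arithmetic) finishes. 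You never split off the trunk edge at all: you encode the leg multisets by $g_A,g_B,g_{A'},g_{B'}$ and enumerate paths directly. I verified your key identity — the within-spider paths contribute $\tfrac12\bigl(g_A^2+g_B^2\bigr)$ plus terms depending only on $A\cup B$ (the correction $-\tfrac12\sum_{c\in A\cup B}g_{\{c\}}^2$ for same-leg pairs, the one-endpoint-at-center term $g_A+g_B$, and the single-leg paths), the crossing paths contribute $x(1+g_A)(1+g_B)$, and with $g_A+g_B=g_{A'}+g_{B'}$ this collapses to $P_T-P_F=(x-1)\bigl(g_Ag_B-g_{A'}g_{B'}\bigr)$ — so Proposition \ref{prop-path} forces $g_Ag_B=g_{A'}g_{B'}$, and equal sum plus equal product gives $\{g_A,g_B\}=\{g_{A'},g_{B'}\}$ by unique factorization of $z^2-sz+p$ over $\mathbb{Q}(x)$. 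What each approach buys: yours is more elementary, avoiding deletion-contraction and Tsujie's irreducibility theorem entirely, and it proves something strictly stronger — a tree with two-vertex trunk is determined by its twig sequence and path sequence alone, hence by its subtree polynomial $S_T$, in the spirit of the Martin–Morin–Wagner result for spiders. The paper's factorization method, by contrast, is the template that scales: the combination of Corollary \ref{coro-de} with differential operators is exactly what drives the proof of Theorem \ref{result1} for longer trunks, whereas your ``equal sum and equal product'' device is closer in spirit to the system-solving of Lemma \ref{lem-2spider}. One shared caveat, affecting the paper equally: since $X_F$ cannot see edge multiplicities, the lemma must implicitly read ``simple graph,'' which your phrase ``we may assume $F$ is simple'' acknowledges.
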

		\begin{proof} 
			Since $X_T=X_F$, $T$ and $F$ have the same number of vertices and edges (by Proposition \ref{prop-path}).
			Martin et al. \cite[Proposition 3]{Martin2008} showed that the girth of a graph $G$ is determined by $X_G$, so $F$ is also a tree.
			By Lemma \ref{lem-trunk}, the trunk of $F$ also has two vertices,  and $T$ and $F$ have the same twig sequence.
			Let $e$ and $f$ be the unique edges in the trunks of $T$ and $F$, respectively. Let  $T_1,T_2$ be the components of $T-e$, and let $F_1, F_2$ be the components of $F-f$.
			Since $T$ and $F$ have the same twig sequence,  $(T/e,(1_{V(T)})_{T/e})\cong (F/f,(1_{V(F)})_{F/f})$.
			
			Applying Corollary  \ref{coro-de} to $T, \{e\}$ and $F,\{f\}$, we have
			\begin{align*}
				0=X_T-X_F=X_{T-e}-X_{F-f}=X_{T_1} X_{T_2}-X_{F_1}X_{F_2}.
			\end{align*}
			Hence,  by Lemma \ref{thm-forest}, we may assume $X_{T_1}=X_{F_1}$ and $X_{T_2}=X_{F_2}$. Thus, for each $i\in [2]$, $T_i$ and $F_i$ are both spiders or both paths, since they have the same degree sequence.

			Suppose  two or four trees in $\{T_1,T_2,F_1,F_2\}$ are spiders. Without loss of generality, we may assume $T_1$ and $F_1$ are spiders.  Then $T_1\cong F_1$ by Lemma~\ref{lem-trunk}, since $X_{T_1}=X_{F_1}$. 
			Since  $X_T=X_F$, the twig sequences of  $T$ and $F$ are the same (by Lemma \ref{lem-trunk}),  which, together with  $T_1\cong F_1$, implies $T\cong F$.

            Now assume that  $T_1,T_2,F_1,F_2$ are all paths. For $i\in [2]$, let $t_{i1}, t_{i2}$ denote the lengths of the two twigs of $T$ contained in $T_i$, and let $f_{i1},f_{i2}$ denote the lengths of the two twigs of $F$ contained in $F_i$.  
            Then, for $i\in [2]$,  $t_{i1}+t_{i2}=f_{i1}+f_{i2}$ as $X_{T_i}=X_{F_i}$. If $\{t_{11},t_{12}\}=\{f_{11},f_{12}\}$, then $\{t_{21},t_{22}\}=\{f_{21},f_{22}\}$, as $T$ and $F$ have the same twig sequences; hence, $T\cong F$. 
            Now assume $\{t_{11},t_{12}\}\ne \{f_{11},f_{12}\}$, and without loss of generality assume that $t_{11}=\max\{t_{11},t_{12}, f_{11},f_{12}\}$. 
            Then, since  $t_{i1}+t_{i2}=f_{i1}+f_{i2}$, we may assume $t_{11}>f_{11}\ge f_{12}>t_{12}$. 
            This implies that $\{f_{11},f_{12}\}=\{t_{21},t_{22}\}$, and hence $\{t_{11},t_{12}\}=\{f_{21},f_{22}\}$ (as $T$ and $F$ have the same twig sequence). 
            So $T\cong F$.  
		\end{proof}
		
		Martin et al. \cite[Theorem 9]{Martin2008} showed that spiders $T$ are determined by  $X^{(2)}_T:=\sum_{l(\lambda)=2}c_\lambda(T) p_\lambda$.
		We prove the following result by using a similar idea.  Define  $X^{(3)}_T:=\sum_{l(\lambda)=3}c_\lambda(T) p_\lambda$ for a tree $T$.
		\begin{lem}\label{lem-2spider}
			Let $F$ be a forest with exactly two components, $T_1$ and $T_2$, such that $|T_1|=|T_2|$ and both $T_1$ and $T_2$ are spiders.
			Then $F$ is determined by $X_{T_1}^{(3)}+X_{T_2}^{(3)}$, the path sequence of $F$, and the twig sequence of $F$.
			In particular, $F$ is determined by $X_{T_1}+X_{T_2}$
			and the twig sequence of $F$. 
		\end{lem}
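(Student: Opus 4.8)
The plan is to reduce the problem to recovering how the combined multiset of leg lengths splits between the two spiders, and then to read that splitting off from the path sequence together with the twig sequence. Write $T_1$ as a spider whose legs have lengths $a_1,\dots,a_p$ and $T_2$ as a spider whose legs have lengths $b_1,\dots,b_q$; since the legs of a spider are exactly its twigs and $|T_1|=|T_2|=n$, we have $\sum_i a_i=\sum_j b_j=n-1$. For $m\ge 1$ let $N_1(m)$ and $N_2(m)$ denote the number of legs of length at least $m$ in $T_1$ and $T_2$, and set $N=N_1+N_2$ and $\delta=N_1-N_2$. Each of $N_1,N_2$ determines, and is determined by, the corresponding multiset of leg lengths, so determining $F$ is equivalent to determining the unordered pair $\{N_1,N_2\}$, equivalently to determining $\delta$ up to a global sign (the sign corresponds to swapping $T_1$ and $T_2$). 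The twig sequence of $F$ records the combined leg-length multiset, hence gives $N$; and since $\sum_m N_i(m)=\sum_i a_i=n-1$ for each spider, I record the key identity $\sum_m\delta(m)=0$.

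Next I would extract the remaining information from the path sequence. In a spider a path of length $\ell$ is of exactly one of two kinds: it lies inside a single leg (possibly ending at the centre), or it passes through the centre as an interior vertex, using two distinct legs. The number of paths of the first kind of length $\ell$, summed over both spiders, equals $\sum_{m\ge\ell}N(m)$, which is already known from the twig sequence. For the second kind, counting the splits $s+t=\ell$ of a through-centre path over ordered pairs of legs and then correcting for the diagonal gives, for $T_1$,
\[
\tfrac12\Big[(N_1*N_1)(\ell)-\sum_{s+t=\ell}N_1(\max(s,t))\Big],
\]
where $*$ denotes the additive convolution $(f*g)(\ell)=\sum_{s+t=\ell}f(s)g(t)$; the analogous expression holds for $T_2$. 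Subtracting the known first-kind count from the path sequence isolates the sum of these two quantities, and since $\sum_{s+t=\ell}N(\max(s,t))$ is again determined by the twig sequence, I can solve for $(N_1*N_1+N_2*N_2)(\ell)$ for every $\ell$.

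The final step is to recover $\delta$ from this. Because $N*N=N_1*N_1+2\,N_1*N_2+N_2*N_2$ is known from the twig sequence, I obtain
\[
\delta*\delta=2\,(N_1*N_1+N_2*N_2)-N*N
\]
for all indices. Encoding $\delta$ as the polynomial $D(z)=\sum_{m\ge 1}\delta(m)z^m$, the displayed convolution is exactly the coefficient sequence of $D(z)^2$; since a polynomial that is a perfect square has a unique polynomial square root up to sign, $D(z)$, and hence $\delta$, is determined up to a global sign. Together with $N$ this recovers $\{N_1,N_2\}$ and therefore $F$. This proves the first (stronger) assertion; the statement that $F$ is determined by $X_{T_1}+X_{T_2}$ and the twig sequence then follows at once, because $X^{(3)}_{T_1}+X^{(3)}_{T_2}$ is the $l(\lambda)=3$ part of $X_{T_1}+X_{T_2}$ and, by Proposition \ref{prop-path}, the path sequence of $F$ is also determined by $X_{T_1}+X_{T_2}$.

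The main obstacle I anticipate is the bookkeeping in the middle step: one must verify carefully that every contribution to the path sequence other than the target convolution $N_1*N_1+N_2*N_2$ is genuinely pinned down by the twig sequence (the separation of single-leg from through-centre paths, and especially the $\max$-correction term $\sum_{s+t=\ell}N(\max(s,t))$), and one must confirm that the square-root step returns an integer-valued $\delta$ of the correct parity, so that $N_1=(N+\delta)/2$ and $N_2=(N-\delta)/2$ are legitimate leg-count functions. An alternative, and the route suggested by the Martin et al.\ analysis via $X^{(2)}$, is to use $X^{(3)}_{T_1}+X^{(3)}_{T_2}$ directly: its coefficients count pairs of removed edges and, after subtracting the same-leg contributions (which depend only on $N$), yield the pointwise products $\delta(x)\delta(y)$, which together with $\sum_m\delta(m)=0$ again determine $\delta$ up to sign. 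The delicate point there is the case analysis according to which of the three resulting parts is the central one, so I would prefer to extract the full convolution from the path sequence.
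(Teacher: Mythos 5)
Your proof is correct, and it takes a genuinely different route from the paper's. The paper splits into two cases according to whether every twig of $F$ has length at most $n/3$: in the short-twig case it reads the unordered pairs $\{N_1(i),N_2(i)\}$ (its $\{x_i,y_i\}$) off the coefficients $d_{n-2i,i,i}$ of $X^{(3)}_{T_1}+X^{(3)}_{T_2}$, anchors at the largest index $k$ with $x_k\neq y_k$, and determines the ordered pairs recursively from the coefficients $d_{n-k-i,k,i}$; in the long-twig case it first places the long twigs using the longest-path length and then runs a similar recursion on the path counts $s_{i+r}-s'_{i+r}$. That case split is forced because interpreting a coefficient $d_\lambda$ as an edge-deletion count requires knowing which part of the resulting partition contains the spider's centre. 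Your argument avoids both the case analysis and $X^{(3)}$ entirely: the counting identity expressing the path sequence as twig-determined terms plus $\tfrac12\bigl(N_1*N_1+N_2*N_2\bigr)$ pins down $\delta*\delta$ with $\delta=N_1-N_2$, and uniqueness of polynomial square roots up to sign in the integral domain $\mathbb{Q}[z]$ recovers $\delta$ up to the sign that swaps $T_1$ and $T_2$. This is in fact a strengthening of the lemma --- the hypothesis involving $X^{(3)}_{T_1}+X^{(3)}_{T_2}$ can be deleted --- and it suffices for both invocations of the lemma in the proof of Theorem \ref{result1}, which only use the ``in particular'' form. Two small remarks: the integrality and parity worry in your final paragraph is vacuous, since the determination statement compares two actual forests $F$ and $F'$, whose difference functions $\delta,\delta'$ are integer-valued a priori, and $\delta'*\delta'=\delta*\delta$ then gives $\delta'=\pm\delta$ directly; and the counting identity holds for every $\ell\ge 1$ (both sides count paths of length $\ell$, possibly zero), so $(N_1*N_1+N_2*N_2)(\ell)$ is determined at all indices --- the bookkeeping you flagged as the main obstacle is exactly as routine as you hoped.
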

		\begin{proof}
			Let $n=|T_1|=|T_2|$. Let $\{s_j\}$ be  the path sequence of $F$, where $s_j$ denotes the number of paths of length $j$ in $F$. By Proposition \ref{prop-path}, $\{s_j\}$ is determined by $X_{T_1}+X_{T_2}$.
			For each $i\in [n]$, let $t_i$ be the number of twigs of length at least $i$ in $F$, and let $x_i$ and $y_i$ be the number of twigs  of length at least $i$ in $T_1$ and $T_2$, respectively.  Then
			\begin{equation*}
				x_i+y_i=t_i.
			\end{equation*}
			It suffices to prove that  $x_i$ and $y_i$ for all $i$ are determined by  $X_{T_1}^{(3)}+X_{T_2}^{(3)}$ and the sequences $\{s_j\}$ and $\{t_j\}$, since $\{t_j\}$ can be determined by the twig sequence of $F$.

			\medskip
			
			\textit {Case} 1.  Every  twig in $F$ has length at most $n/3$.
			
			Let $d_{\lambda}$ denote the absolute value of the coefficient of $p_{\lambda}$ in $X_{T_1}+X_{T_2}$ for all $\lambda\vdash n$. 
			Then, for each $i\le n/3$,  $d_{n-2i,i,i}$ counts the number of choices of two edges from $T_1$ (or $T_2$) whose deletion from $T_1$ (or $T_2$)  results in three components, one of order $n-2i$ (which is at least $n/3$) and the other two each of order $i$. 
			There are two ways to do this.
			One way is to delete two edges from a twig of length at least $2i$ (the $i$-th edge and the $(2i)$-th edge from the leaf in that twig), and the other is to delete one edge  each from two twigs of length at least $i$ (the $i$-th edge). 
			Thus, the number of ways to create three components from $T_1$ (or $T_2$) corresponding to the partition $(n-2i, i, i)$ is $\binom{x_i}{2}+x_{2i}+ \binom{y_i}{2}+y_{2i}$. 
			Hence,
			we have equations in $x_i$ and $y_i$ for $i\le n/3$:
			\begin{align*}
				\begin{cases}
					\binom{x_i}{2}+\binom{y_i}{2}=d_{n-2i,i,i}-(x_{2i}+y_{2i})=d_{n-2i,i,i}-t_{2i};\\
					x_i+y_i=t_i.
				\end{cases}		
			\end{align*}
			Thus, the sets $\{x_i,y_i\}$ for $i\le n/3$ are  determined by these two equations, and hence, by  $X_{T_1}^{(3)}+X_{T_2}^{(3)}$ and the sequence $\{t_j\}$.
			
			To determine $F$ (up to isomorphism), it suffices to determine the ordered pair $(x_i,y_i)$ for all $i\le n/3$. 
			If $x_i=y_i$ for $i\le n/3$ then we are done. 
			So assume there exists $i$  such that $x_i\neq y_i$. 
			Let $k=\max\{i:  x_i\neq y_i\}$. We determine $(x_i,y_i)$ in the order $i=k, k-1, \dots, 1$.
			Since $x_i=y_i$ for all $k<i\le n/3$, we may fix one of the two ordered pairs of $\{x_k,y_k\}$, say  $(x_k,y_k)$.   
			Suppose we have determined the ordered pairs $(x_j,y_j)$ where $j=i+1, \dots, \lfloor n/3\rfloor$ for some $ i< k$.
			
			We count $d_{n-k-i,k,i}$.  Since $n-k-i> n/3$, there are two ways to delete two edges to disconnect $T_1$ (or $T_2$) into components of order $n-k-i, k, i$: delete the $(i+k)$-th edge from a twig of length at least $k+i$  and delete either the $i$-th edge or the  $k$-th edge from the leaf of the twig; or delete the $k$-th edge from one twig of length at least $k$ and the $i$-th edge from another twig of length at least $i$. 
			Thus,  $d_{n-k-i,k,i}=x_k(x_i-1)+y_k(y_i-1)+2t_{k+i}$; so
			\begin{equation*}
				x_kx_i+y_ky_i=d_{n-k-i,k,i}-2t_{k+i}+t_k,
			\end{equation*}
			Since $x_k\neq y_k$ and we also have $x_i+y_i=t_i$ for $i\in [k]$, we see that  $(x_i,y_i)$ is determined by  $X_{T_1}^{(3)}+X_{T_2}^{(3)}$ and the sequence $\{t_j\}$.
			
			\medskip

			\textit {Case} 2. $F$ has twigs of length more than $n/3$.
			
			 Let $L_1, \dots, L_t$ denote the twigs of $F$ with length more than $n/3$,  and let $l_j=|L_j|-1$ for all $j\in [t]$. Without loss of generality, we may assume $l_1\ge \dots\ge l_t$.
			Note that each $T_i$ has at most two twigs of lengths  more than $n/3$; so $t\le 4$.
			Let
			\[s:=\max\{i\colon s_i>0 \},
			\]
			which is the length of a longest path in $F$.
			We now show that $t$ and  $s$ help decide which of the twigs $L_1, \dots, L_t$  belong to the same tree $T_i$:
			\begin{itemize}
				\item 	If $t=1$, then set  $L_1\subseteq T_1$;
				\item if $t=2$, then  set $L_1\cup L_2\subseteq T_1$ (when $s=l_1+l_2$), or  $L_1\subseteq T_1$ and $L_2\subseteq T_2$ (when $s\ne l_1+l_2$);
				\item if $t=3$, then  set  $L_1\cup L_j\subseteq T_1$ (when $s=l_1+l_j$ for some $j\in\{2,3\}$), or set $L_2\cup L_3\subseteq T_1$  (when $s\neq l_1+l_j$ for any $j\in\{2,3\}$);
				\item if $t=4$, then set $L_1\cup L_4\subseteq T_1$ when $s=l_1+l_4=l_2+l_3$ and otherwise set  $L_i\cup L_j\subseteq T_1$ for $\{i,j\}\subseteq [t]$ with $s=l_i+l_j$.
			\end{itemize}
			Thus all ordered pairs $(x_i,y_i)$ for $i>n/3$ are determined by the sequences  $\{t_j\}$ and $\{s_j\}$.
			
			Next, we show that the ordered pairs $(x_i,y_i)$ for $i\le n/3$ can be determined.
			If $x_j=y_j$ for all $j>n/3$, then by a similar argument as in Case 1, we can  determine the ordered pairs $(x_i,y_i)$ for $i\le n/3$ from $X_{T_1}^{(3)}+X_{T_2}^{(3)}$ and the sequence $\{t_j\}$. 
			Let $r=\max\{j:  x_j\neq y_j\}$, and suppose  $r>n/3$. 
			Note that $(x_r,y_r), \ldots, (x_{\lfloor n/3\rfloor+1},y_{\lfloor n/3\rfloor+1})$ are determined above.

			Suppose that  for some  $i\le n/3$, the ordered pairs $(x_r,y_r), \dots, (x_{i+1},y_{i+1})$ have been determined.
			Let $s_j'$ denote the number of paths of length $j$ in $F$ that are each completely contained in a twig of $F$, which is determined by the twig sequence of $F$.
			Then $s_{i+r}-s^\prime_{i+r}$ is the number of paths of length $i+r$ in $F$ that are not contained in any twig of $F$, which  is determined by the path sequence and twig sequence of $F$.

                        For each $k\in \{1, \ldots, \lfloor\frac{i+r}{2}\rfloor\}$, let $p_k$ denote the number of  those paths $P$ in $F$, such that $P$ is contained in $T_j$ for some $j\in [2]$, and $P$ is not contained in any twig of $T_j$, and a subpath of $P$ from one of its ends to the trunk of $T_j$ has length $k$.   
			Then $p_k=x_{i+r-k}(x_k-1)+y_{i+r-k}(y_k-1)$ for $k<(i+r)/2$, and $p_{(i+r)/2}=\binom{x_{(i+r)/2}}{2}+\binom{y_{(i+r)/2}}{2}$ when $i+r$ is even.
			Let
			$$g(i):=\sum_{k=1}^{\lfloor\frac{i+r}{2}\rfloor} (x_{i+r-k} (x_k-1)+y_{i+r-k} (y_k-1).
			$$
		Then
			\begin{align*}
				s_{i+r}-s^\prime_{i+r}=\sum_{k=1}^{\lfloor\frac{i+r}{2}\rfloor}p_k=
				\begin{cases}
					g(i), &\text{if~} i+r \text{~is odd};\\
					g(i)-\binom{x_{(i+r)/2}}{2}-\binom{y_{(i+r)/2}}{2}, &\text{if~} i+r \text{~is even}.
				\end{cases}
			\end{align*}

                        Since $k\le \lfloor \frac{i+r}{2}\rfloor$ and $r>i$, we have $i+r-k\ge \frac{i+r}{2}>i$; so $(x_{i+r-k},y_{i+r-k})$ is determined for all 
                        $k\in \{1, \ldots,\lfloor \frac{i+r}{2}\rfloor\}$ (by assumption). 
                        Moreover, by the definintion of $r$,  we have $x_{i+r-k}=y_{i+r-k}$ which is determined  for each $k\in [i-1]$ (by assumption).  
                        It implies that  
			\[\sum_{k=1}^{i-1} (x_{i+r-k} (x_k-1)+y_{i+r-k} (y_k-1))
			=\sum_{k=1}^{i-1} x_{i+r-k} (t_k-2)
                      \]
                      is determined by the sequences $\{t_j\}$ and $\{s_j\}$.

                      Thus, all terms in the above expression of  $s_{i+r}-s^\prime_{i+r}$, with the exception when $k=i$, are determined by the sequences $\{t_j\}$ and $\{s_j\}$.  
                      Thus, by considering the term when $k=i$, we obtain  the following  equation from expression of  $s_{i+r}-s^\prime_{i+r}$:
			\[x_rx_i+y_ry_i=c_i,
			\]
			where $c_i$ is determined by the sequences $\{s_j\}$ and  $\{t_j\}$. 
			Since  $x_r\neq y_r$ and $x_i+y_i=t_i$, $(x_i,y_i)$  is determined by the sequences $\{s_j\}$ and $\{t_j\}$.
		\end{proof}

		We now prove Theorem \ref{result1} which  extends Lemma  \ref{thm-2tr} to all trees with exactly two vertices of degree at least 3.
		
		\medskip

		\begin{Proof of 1}
			Martin et al. \cite[Proposition 3]{Martin2008} showed that the girth of a graph $G$ is determined by $X_G$, so $F$ is also a tree.
			By  Lemma \ref{lem-trunk} \cite[Theorem 3]{Crew2022},
             $F$ also has exactly two vertices of degree at least 3, and the trunks of $F$ and $T$ are paths of the same length, say $m$.
			We may assume $m\ge 2$ by Lemma~\ref{thm-2tr}. 
			Let $P,Q$ denote the trunks of  $T,F$, respectively. 
			Let $T_1$ and $T_m$ be the components of the induced subgraph obtained from $T$ by  deleting all interior vertices of $P$,  with $|T_1|\ge|T_m|$. 
			Let $F_1$ and $F_m$ be the components of the induced subgraph obtained from $F$ by deleting all interior vertices of $Q$,   with $|F_1|\ge|F_m|$.
                        
			By Lemma \ref{lem-trunk}, the twig sequences of $T$ and $F$ are the same,  since $X_T=X_F$; so we have  $(T/P,(1_{V(T)})_{T/P})\cong (F/Q, (1_{V(F)})_{F/Q})$. Thus,
			by Corollary  \ref{coro-de}, it follows from $X_T=X_F$ that
			\begin{equation}\label{eq26}
				\sum_{\emptyset\neq I\subseteq E(P)}(-1)^{|I|-1}X_{T-I}
				=\sum_{\emptyset\neq J\subseteq E(Q)
				}(-1)^{|J|-1}X_{F-J}.
			\end{equation}
			We will repeatedly apply the differential operator and Lemma \ref{prop3} to \eqref{eq26}. Without loss of generality, we may assume that  $|T_1|\ge|F_1|$.
             
             \begin{cla}
             	When $|T_1|>|T_m|$, we have  $X_{T_m}=X_{F_m}$,  and when $|T_1| = |T_m|$, we have $X_{T_1} +X_{T_m} = X_{F_1} +X_{F_m}$. 
             	As a result, we always have $|T_1|=|F_1|$
             	and $|T_m|=|F_m|$. 
             \end{cla}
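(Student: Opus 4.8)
First I would fix notation reflecting the structure. Write the trunk of $T$ as $P=v_0v_1\cdots v_m$, where $v_0,v_m$ are the two vertices of degree at least $3$ and $v_1,\dots,v_{m-1}$ have degree $2$; then $T_1,T_m$ are the spiders centered at $v_0,v_m$ obtained by deleting the interior trunk vertices, and $|T_1|+|T_m|=n-(m-1)=n-m+1$. Doing the same for $F$ (trunk $Q$, spiders $F_1,F_m$) gives $|F_1|+|F_m|=n-m+1$ as well, since by Proposition~\ref{prop-path} and Lemma~\ref{lem-trunk} the equality $X_T=X_F$ already forces $T,F$ to share the order $n$, the degree and path sequences, the twig sequence, and the trunk length $m$. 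Combined with the normalizations $|T_1|\ge|T_m|$, $|F_1|\ge|F_m|$ and the assumption $|T_1|\ge|F_1|$, this forces
\[
|T_1|\ \ge\ |F_1|\ \ge\ |F_m|\ \ge\ |T_m|,
\]
so $|T_m|$ is the smallest of the four bush-orders, and $|T_1|=|T_m|$ collapses all four to $(n-m+1)/2$. Thus the Claim reduces to pinning down the \emph{unordered} pair $\{|T_1|,|T_m|\}$ and recovering the smaller spider's chromatic symmetric function (or, in the tie, the symmetric sum).

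The engine I would use is differentiation. Applying $\partial/\partial p_j$ to \eqref{eq26} and invoking Lemma~\ref{prop3}, each summand $X_{T-I}$ becomes a signed sum $\sum_H X_{(T-I)-V(H)}$ over $j$-vertex subtrees $H$ of the forest $T-I$; here $T_1,T_m$ are induced subtrees of every $T-I$, and $T-E(P)=T_1\sqcup T_m\sqcup(m-1)K_1$. The naive move is to take $j=|T_1|$ and read off the term deleting all of $T_1$, which would leave $X_{T_m}\,p_1^{\,m-1}$. The hard part shows up immediately: a direct check reveals this extremal term is produced with opposite signs by $I=E(P)$ and by $I=E(P)\setminus\{v_0v_1\}$, so it cancels. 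This reflects the fact that \eqref{eq26} carries no more information than $X_T=X_F$ itself (its two sides differ from $X_T,X_F$ by the equal contracted terms $X_{(T/P,\cdot)}=X_{(F/Q,\cdot)}$). Defeating this cancellation is the crux of the proof.

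To recover the orders despite it, the plan is to read them from invariants that \emph{survive}, namely the three-part power-sum coefficients $c_\lambda$ with $l(\lambda)=3$ of $X_T$ directly, in the spirit of the $X^{(3)}$-argument of Lemma~\ref{lem-2spider}. Such a coefficient counts, up to sign, the pairs of edges whose deletion splits $T$ into three components of the prescribed orders; deleting the two trunk-end edges $v_0v_1$ and $v_{m-1}v_m$ realizes the split $\{\,|T_1|,\,m-1,\,|T_m|\,\}$, i.e. the two full bushes together with the trunk interior. I would isolate this particular split from all other edge-pairs that happen to produce the same three orders by subtracting off their contributions using the already-known trunk length $m$ and the path and twig sequences; this should yield $\{|T_1|,|T_m|\}=\{|F_1|,|F_m|\}$, and with the chain above give $|T_1|=|F_1|$ and $|T_m|=|F_m|$.

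For the function equality itself, the plan is to peel off legs by repeated differentiation (Lemma~\ref{prop3}) so as to reduce each spider to a controlled remainder, and to assemble across several orders $j$ a linear system in $X_{T_1},X_{T_m}$ and their $F$-counterparts analogous to the one solved in Lemma~\ref{lem-2spider}; the normalization $|T_1|\ge|F_1|$ is what makes this system solvable and yields $X_{T_m}=X_{F_m}$ when $|T_1|>|T_m|$, while in the tie $|T_1|=|T_m|$ the system is intrinsically symmetric and determines only $X_{T_1}+X_{T_m}=X_{F_1}+X_{F_m}$. The main obstacle throughout is the passage from \emph{summed} to \emph{termwise} information: differentiating produces sums of chromatic symmetric functions of forests, to which Lemma~\ref{thm-forest} cannot be applied directly, and the cancellation identified above shows the most obvious extremal term vanishes; overcoming this will require choosing the orders of differentiation and the power-sum coefficients so that a single factorizable forest-term is isolated on each side, presumably organized by a short induction on the trunk length $m$ with base case $m=2$ supplied by Lemma~\ref{thm-2tr}.
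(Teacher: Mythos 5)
Your setup matches the paper's: the chain $|T_1|\ge|F_1|\ge|F_m|\ge|T_m|$, equation \eqref{eq26}, and differentiation via Lemma~\ref{prop3} are exactly the right ingredients, and your observation that the naive choice $j=|T_1|$ fails (the term $X_{T_m}p_1^{m-1}$ arises with opposite signs from $I=E(P)$ and $I=E(P)\setminus\{v_0v_1\}$ and cancels) is correct. But this is where the proposal stops being a proof: the one idea that makes the argument work is the \emph{choice of the differentiation order}, and you never find it. The paper applies $\partial/\partial p_k$ to \eqref{eq26} with $k=|T_1|+m-1$, the order of the largest component that $T-I$ can possibly have over all nonempty $I\subseteq E(P)$. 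With this choice there is no cancellation to fight: if $|I|\ge 2$, or if $I$ is a single interior edge of $P$, then every component of $T-I$ has at most $|T_1|+m-2<k$ vertices, so the inner sum over $k$-vertex subtrees is empty; the only surviving terms come from $I$ being a single terminal edge of $P$ --- the one next to $T_m$ always contributes its unique $k$-vertex subtree $H$, with $T-I-V(H)=T_m$, and the one next to $T_1$ contributes (yielding $T_1$) exactly when $|T_1|=|T_m|$. On the $F$ side the same analysis applies, and nonvanishing of the $T$ side forces $|F_1|+m-1\ge k$, hence $|F_1|=|T_1|$ and $|F_m|=|T_m|$; the differentiated identity then collapses in one stroke to $X_{T_m}=X_{F_m}$, respectively $X_{T_1}+X_{T_m}=X_{F_1}+X_{F_m}$.

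Your two fallback plans do not close this gap. Reading $\{|T_1|,|T_m|\}$ off the length-three coefficients $c_\lambda$ is only asserted (``this should yield\dots''): many edge pairs other than the two trunk-end edges can realize the partition $(|T_1|,m-1,|T_m|)$ (for instance pairs of edges inside long twigs), and you give no mechanism for subtracting their contributions; moreover, even granting the orders, this yields none of the functional equalities, which are the real content of the Claim --- they are what Case 2 and Subcases 2.1, 2.2 of the main proof actually consume. Likewise, the ``linear system assembled across several orders $j$'' for recovering $X_{T_m}=X_{F_m}$ is never written down, and your closing requirement that one choose the orders of differentiation ``so that a single factorizable forest-term is isolated on each side'' is precisely the missing step, not a resolution of it. In short, the proposal diagnoses the obstacle accurately but does not overcome it; substituting $k=|T_1|+m-1$ for $j=|T_1|$ in your own ``engine'' is all that is needed, after which neither fallback is required.
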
           
			\begin{proof}
					Let $k=|T_1|+m-1$. 
			Applying $\partial/\partial p_k$
			to \eqref{eq26}, it follows from  Lemma \ref{prop3} that
			\begin{align*}
			&	\sum_{\emptyset\neq I\subseteq E(P)}(-1)^{|I|-1+(k-1)}
				\sum_{
					\substack{
						k\text{-vertex~tree}\\
						H\subseteq T-I}
				}	X_{T-I-V(H)}
				\\=&\sum_{\emptyset\neq J\subseteq E(Q)
				}(-1)^{|J|-1+(k-1)}
				\sum_{
					\substack{
						k\text{-vertex~tree}\\
						L\subseteq F-J}
				}
				X_{F-J-V(L)}.
			\end{align*}
			Each term of the inner summation about $T$ is equal to 0 unless $|I|=1$ and the unique edge in $I$ is the terminal edge of $P$ incident to $T_m$ (when $|T_1 | > |T_m |$) or either of the terminal edges of $P$ (when $|T_1 | = |T_m |$). 
			The same is true for the terms in the inner summation about $F$. 
			Thus the above equation reduces to $X_{T_m} = X_{F_m}$ (when $|T_1| > |T_m|$) or  $X_{T_1} +X_{T_m} = X_{F_1} +X_{F_m}$ (when $|T_1| = |T_m|$).
		\end{proof}

			\medskip

			{\it Case} 1. 	$|T_1|=|T_m|$.
                        
			Then $X_{T_1}+X_{T_m}=X_{F_1}+X_{F_m}$ and $|F_1|=|T_1|=|T_m|=|F_m|$, by the above claim. So by Proposition \ref{prop-path}, $T_1\cup T_m$ and $F_1\cup F_m$ have the same degree sequence and same path sequence.

			Suppose  $T_1$ and $T_m$ are spiders. Then $F_1$ and $F_m$ are also spiders as $T_1\cup T_m$ and $F_1\cup F_m$ have the same degree sequence. 
			By Lemma \ref{lem-trunk} , $T$ and $F$ have the same twig sequence (since $X_T=X_F$). So $T_1\cup T_m$ and $F_1\cup F_m$ also have the same twig sequence.
			Thus, $F\cong T$ by  Lemma \ref{lem-2spider}.

         Now assume exactly one of $T_1,T_m$ is a path, say $T_1$. 
         Then, $T_1\cup T_m$ has precisely one vertex of degree at least 3, and $T_m$ is a spider. 
         Hence, $F_1\cup F_m$ also has exactly one vertex of degree at least 3, as $T_1\cup T_m$ and $F_1\cup F_m$ have the same degree sequence. 
         Therefore, precisely one of $F_1, F_m$ is a path, and we may assume that $F_1$ is a path and $F_m$ is a spider. 
         Then, $T_1\cong F_1$ as $|T_1|=|F_1|$. 
         Since    $X_{T_1}=X_{F_1}$ (as $T_1\cong F_1$) and $X_{T_1}+X_{T_m}=X_{F_1}+X_{F_m}$, we have $X_{T_m}=X_{F_m}$. 
         So $T_m\cong F_m$  by Lemma~\ref{lem-trunk}. 
         Since the twig sequences of $T$ and $F$ are the same, the lengths of the two twigs of $T$ contained in $T_1$ are the same as those of the two twigs of $F$ contained in $F_1$.  Therefore,  $F\cong T$.
         
         It remains to consider the case when both  $T_1$ and $T_m$ are paths. Then $F_1$ and $F_m$ are also paths as $T_1\cup T_m$ and $F_1\cup F_m$ have the same degree sequence.
         For $i\in \{1,m\}$, let $t_{i1}, t_{i2}$ denote the lengths of the two twigs of $T$ contained in $T_i$, and let $f_{i1},f_{i2}$ denote the lengths of the two twigs of $F$ contained in $F_i$.  
         Then, $t_{11}+t_{12}=t_{m1}+t_{m2}=f_{11}+f_{12}=f_{m1}+f_{m2}$. 
         If $\{t_{11},t_{12}\}=\{f_{11},f_{12}\}$, then $\{t_{m1},t_{m2}\}=\{f_{m1},f_{m2}\}$, as $T$ and $F$ have the same twig sequence; hence, $T\cong F$. 
         Now assume $\{t_{11},t_{12}\}\ne \{f_{11},f_{12}\}$, and without loss of generality assume that $t_{11}=\max\{t_{11},t_{12}, f_{11},f_{12}\}$. 
         Then, since  $t_{11}+t_{12}=f_{11}+f_{12}$, we may assume $t_{11}>f_{11}\ge f_{12}>t_{12}$. 
         This implies that $\{f_{11},f_{12}\}=\{t_{m1},t_{m2}\}$, and hence $\{t_{11},t_{12}\}=\{f_{m1},f_{m2}\}$ (as $T$ and $F$ have the same twig sequence). 	Thus, we have  $F\cong T$.
			
			\medskip
			
            {\it Case} 2.  $|T_1|>|T_m|$.
                        
             Then $X_{T_m}=X_{F_m}$ from the above claim. By Proposition~\ref{prop-path} and Lemma~\ref{lem-trunk}, $T_m$ and $F_m$ have the same degree sequence, path sequence, and twig sequence. 
             Recall that  $T$ and $F$ have the same twig sequence and isomorphic trunk (by  Lemma \ref{lem-trunk} since $X_T=X_F$). 
                        
			If $T_m$ and $F_m$ are spiders, then $T_m\cong F_m$; hence, since $T$ and $F$ have the same twig sequence and isomorphic trunk, $T\cong F$.
                        So we may assume that both $T_m$ and $F_m$ are paths. 

                        Now suppose that $T_1$ and $F_1$ are also  paths.
                        For $i\in \{1,m\}$, let $t_{i1}, t_{i2}$ denote the lengths of the two twigs of $T$ contained in $T_i$, and let $f_{i1},f_{i2}$ denote the lengths of the two twigs of $F$ contained in $F_i$.  Then, $t_{11}+t_{12}=f_{11}+f_{12}>f_{m1}+f_{m2}=t_{m1}+t_{m2}$. If $\{t_{11},t_{12}\}=\{f_{11},f_{12}\}$, then $\{t_{m1},t_{m2}\}=\{f_{m1},f_{m2}\}$, as $T$ and $F$ have the same twig sequences; hence, $T\cong F$. Now assume $\{t_{11},t_{12}\}\ne \{f_{11},f_{12}\}$, and without loss of generality assume that $t_{11}=\max\{t_{11},t_{12}, f_{11},f_{12}\}$. Then, since  $t_{11}+t_{12}=f_{11}+f_{12}$, we may assume $t_{11}>f_{11}\ge f_{12}>t_{12}$. This implies that $\{f_{11},f_{12}\}=\{t_{m1},t_{m2}\}$ (as $T$ and $F$ have the same twig sequence), contradicting $f_{11}+f_{12}>t_{m1}+t_{m2}$.

			  It remains to consider the case when  $T_1$ and $F_1$ are spiders, and $T_m$ and $F_m$ are paths.
			 Let $e_1$ be the edge of $P$ incident with $T_1$, let $e_m$ be the edge of $P$ incident with $T_m$,  and let $e_{m-1}$ be the edge of $P-e_m$ incident with $e_m$.  We use $T_m+e_m$ to denote the subtree of $T$ induced by  $E(T_m)\cup \{e_m\}$. Define  $f_1, f_m,f_{m-1}$ and $F_m+f_m$ similarly.
                          \medskip

                          {\it Subcase} 2.1.   $|T_1|>|T_m|+1$.

                          Then for any nonempty $I\subseteq E(P)$, $T-I$ contains a subtree of order $|T_1|+m-2$ if and only if $I\subseteq \{e_{m-1},e_m\}$.
                           Therefore, 
                          by applying  $\partial/\partial p_{|T_1|+m-2}$ to \eqref{eq26}, it follows from Lemma~\ref{prop3} that
			$$
			\frac{\partial(X_{T-{e_m}}+X_{T-e_{m-1}}-X_{T-\{e_m,e_{m-1}\}})
			}{\partial p_{|T_1|+m-2}}
			=\frac{\partial(X_{F-{f_m}}+X_{F-f_{m-1}}-X_{F-\{f_m,f_{m-1}\}})
			}{\partial p_{|T_1|+m-2}}.$$
                        This yields
			$$
			(l_1+1)p_1X_{T_m}+X_{T_m+e_m}
			-p_1X_{T_m}
			=(l_2+1)p_1X_{F_m}+X_{F_m+f_m}
			-p_1X_{F_m},$$
			where $l_1$ and $l_2$ denote the numbers of leaves of $T_1$ and $F_1$, respectively.
                        
			Note that  $l_1=l_2$, since by Proposition \ref{prop-path}, $T$ and $F$ have the same number of leaves (as $X_T=X_F$) and $T_m$ and $F_m$ have the same number of leaves (as
                        $X_{T_m}=X_{F_m}$).  Since $X
			_{T_m}=X_{F_m}$, we have $X_{T_m+e_m}=X_{F_m+f_m}$. Hence by Lemma~\ref{lem-trunk}, $T_m+e_m$ and $F_m+f_m$ (as spiders) have the same twig sequence.           Thus, the lengths of two twigs of $T$ contained in $T_m$ are the same as the lengths of the two twigs of $F$ contained in $F_m$.   Hence, $T\cong F$ as $T$ and $F$ have the same twig sequence and isomorphic trunk.

			\medskip
                        
			{\it Subcase} 2.2.  $|T_1|=|T_m|+1$.

                        Then for any nonempty $I\subseteq E(P)$, $T-I$ contains a subtree of order $|T_1|+m-2$ if and only if $I\subseteq \{e_{m-1},e_m\}$ or $I=\{e_1\}$.
                        As in Subcase 2.1, we apply  $\partial/\partial p_{|T_1|+m-2}$ to \eqref{eq26}.  It follows from Lemma~\ref{prop3} and $X_{T_m}=X_{F_m}$ that 
			\begin{equation*}
				X_{T_m+e_m}+X_{T_1}=X_{F_m+f_m}+X_{F_1}.
                              \end{equation*}
                              
                         Note that  the twig sequence of $T_1\cup (T_m+e_m)$ is obtained from the twig sequence of $T$ by adding one to the number of twigs of length 1 in $T$
                         (corresponding to the twig of $T_m+e_m$  induced by $e_m$) and the twig sequences of $F_1\cup (F_m+f_m)$ is obtained from the twig sequence of $F$ by adding one to the number of twigs of length 1 in $F$
                         (corresponding to the twig of $F_m+f_m$ induced by $f_m$). Therefore, the twig sequence of  $T_1\cup (T_m+e_m)$ and  $F_1\cup (F_m+f_m)$ are the same (as $X_T=X_F$).
			It follows from Lemma \ref{lem-2spider} that
			$T_1\cup (T_m+e_m)\cong F_1\cup (F_m+f_m)$. Hence,  $T_1\cong F_1$ and  $T_m+e_m\cong F_m+f_m$,
			or $T_1\cong  F_m+f_m$ and $T_m+e_m\cong F_1$.  
			
			Suppose  $T_1\cong F_1$ and  $T_m+e_m\cong F_m+f_m$. 
			Since $T_1$ and $F_1$ are spiders,  the lengths of twigs of $T$ contained
			in $T_1$ are the same as those of the twigs of $F$ contained in $F_1$. Similarly, since $T_m+e_m$ and $F_m+f_m$ are spiders, we see that the lengths of the
			two twigs of $T$ contained in $T_m$ are the same as those of the two twigs of $F$ contained in $F_m$. Thus, since $T$ and $F$ have isomorphic trunk, we
			must have $T\cong F$.
			
			Now assume $T_1\cong  F_m+f_m$ and $T_m+e_m\cong F_1$. 
			Then $T_1$ has exactly 3 twigs and one of these is of length 1 (and we use $u$ to denote its leaf), and if we use $t_{11}, t_{12}$ to denote the lengths of the other two twigs in $T_1$,  then $t_{11}, t_{12}$ are the lengths of the two twigs of $F$ contained in $F_m$.  Similarly, $F_1$ has exactly 3 twigs and one of them is of length 1 (and we use $v$ to denote its leaf), and if we use $f_{11}, f_{12}$ to denote the lengths of the other two twigs in $F_1$, then $f_{11}, f_{12}$ are the lengths of the two twigs of $T$ contained in $T_m$. Note that $T-u\cong F-v$. 
			Also note that  if $\{t_{11},t_{12}\}=\{f_{11},f_{12}\}$, then $T\cong F$.
			
			So we may assume $\{t_{11},t_{12}\}\ne \{f_{11},f_{12}\}$. Since $t_{11}+t_{12}=f_{11}+f_{12}$ (as $|T_1|=|F_1|$), we may further assume that  $t_{11}>f_{11}\ge f_{12}>t_{12}$. We now derive a contradiction by counting the paths of length $t_{11}+m+1$ in each of   $T$ and $F$. Since $T-u\cong F-v$, $T-u$ and $F-v$ have the same number of paths of length $t_{11}+m+1$. Therefore,  since $T$ and $F$ have the same path sequence, the number of paths of length $t_{11}+m+1$ in $T$ containing $u$ must equal the number of paths of length $t_{11}+m+1$ in $F$ containing $v$. But this is not true, as no such path exists in $T$, and there is one such path in $F$. 
                    \end{Proof of 1}
\begin{rem}
Theorem \ref{result1} confirms Conjecture \ref{conj} for trees with exactly two vertices of degree at least 3. Our approach may be extended to confirm
Conjecture \ref{conj} for trees with exactly 3 vertices of degree at least 3, as trunks of those trees are also paths. In general, it is important to know whether the trunks of trees can be determined from their chromatic symmetric functions. 
\end{rem}

	\section{The generalized degree sequence}
		
		There is an interesting problem posed by Crew in \cite{Crew2022}.
		In  \cite{Crew2020-1, Crew2022},
		Crew introduced
		the {\it generalized} degree sequence of a graph $G$ as  the multiset
		$$\{(|W|,e(W),d(W)) \colon W\subseteq V(G)\},$$
		where $e(W)=|E(G[W])|$ and $d(W)$ denotes the number of edges of $G$
		with exactly one endpoint in $W$. When restricting this definition to those $W$ with
		$|W|=1$, one obtains the degree sequence of $G$.
		\begin{conj}
			[Crew, \cite{Crew2022}] For  any tree $T$, the generalized degree sequence of $T$ is determined by $X_T$.
		\end{conj}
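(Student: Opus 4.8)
The plan is to lift $X_T$ to its \emph{coproduct} and to read the generalized degree sequence off the resulting subset expansion. Throughout, write $V=V(T)$, and for $W\subseteq V$ let $c(W)$ denote the number of components of $T[W]$. Since $T$ is a tree, $T[W]$ and $T[V\setminus W]$ are forests for every $W$, so $e(W)=|W|-c(W)$; counting the $n-1$ edges of $T$ by how many endpoints lie in $W$ gives $d(W)=c(W)+c(V\setminus W)-1$. Thus the triple $(|W|,e(W),d(W))$ and the triple $(|W|,c(W),c(V\setminus W))$ determine each other, and it suffices to prove that the multiset $\{(|W|,c(W),c(V\setminus W)):W\subseteq V\}$ is determined by $X_T$.

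Recall that $\Lambda=\bigoplus_n\Lambda_n$ carries the Hopf-algebra comultiplication $\Delta\colon\Lambda\to\Lambda\otimes\Lambda$ defined as the algebra map for which every $p_k$ is primitive, i.e.\ $\Delta p_k=p_k\otimes 1+1\otimes p_k$. The first step is to establish, for every graph $G$, the identity
$$\Delta X_G=\sum_{W\subseteq V(G)}X_{G[W]}\otimes X_{G[V(G)\setminus W]}.$$
This follows from the power-sum expansion \eqref{eq2}: expanding $\Delta p_{\pi(A)}$ by primitivity splits the parts of $\pi(A)$ (the components of $(V(G),A)$) into two sub-multisets, and the bijection sending a pair (edge set $A$, chosen set of components) to $(W,\,A\cap E(G[W]),\,A\cap E(G[V\setminus W]))$, together with $(-1)^{|A|}=(-1)^{|A\cap E(G[W])|}(-1)^{|A\cap E(G[V\setminus W])|}$, reassembles the right-hand side. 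The crucial consequence is that $\Delta X_T$ is computed intrinsically from $X_T$; projecting onto the bidegree-$(k,n-k)$ component of $\Lambda\otimes\Lambda$ isolates $\sum_{|W|=k}X_{T[W]}\otimes X_{T[V\setminus W]}$, which is therefore determined by $X_T$.

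Next I would extract the component counts by a linear functional. For each $i$ set $\Theta_i=\sum_{\ell(\lambda)=i}\langle\,\cdot\,,p_\lambda/z_\lambda\rangle$, so that $\Theta_i(X_F)=\sum_{\ell(\lambda)=i}c_\lambda(F)$. By the forest coefficient formula recorded earlier, a $k$-vertex forest with $a$ components satisfies $\Theta_i(X_F)=(-1)^{k-i}\binom{k-a}{i-a}$, which is triangular in $(i,a)$: it vanishes for $i<a$ and equals $(-1)^{k-a}$ at $i=a$. Applying $\Theta_i\otimes\Theta_j$ to the bidegree-$(k,n-k)$ component of $\Delta X_T$ yields
$$\sum_{a,b}M_k(a,b)\,(-1)^{k-i}\binom{k-a}{i-a}\,(-1)^{(n-k)-j}\binom{(n-k)-b}{j-b},$$
where $M_k(a,b)=\#\{W:|W|=k,\ c(W)=a,\ c(V\setminus W)=b\}$. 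Since each triangular matrix is invertible, so is their tensor, and inverting recovers every $M_k(a,b)$ from $X_T$. Running the two functionals as full length-indexed vectors records the \emph{joint} distribution of $(c(W),c(V\setminus W))$, not merely the marginals — exactly what the triple $(|W|,e(W),d(W))$ requires — and mapping $(k,a,b)\mapsto(k,\,k-a,\,a+b-1)$ then reconstructs the generalized degree sequence.

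The main obstacle, and the step I expect to demand the most care, is the coproduct identity: verifying rigorously that the chromatic symmetric function intertwines the subset comultiplication on graphs with the standard coproduct on $\Lambda$, and checking that the bidegree projection cleanly separates subsets by cardinality (so that the left tensor factor of each surviving term really is $X_{T[W]}$ with $|W|=k$). Once that identity is in place, the rest is routine linear algebra over the already-known forest coefficients, and the triangular inversion in the third step closes the argument.
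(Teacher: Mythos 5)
Your proposal is correct, and it proves strictly more than the paper itself does: the paper never proves Crew's conjecture, it only proves Theorem~\ref{thm-delete}, which recovers the multiset $\{(|H|,d(V(H)))\}$ over \emph{connected} induced subgraphs $H$ (subtrees) of $T$, whereas your argument handles arbitrary subsets $W\subseteq V(T)$ and hence the full generalized degree sequence. The step you flagged as delicate, the identity $\Delta X_G=\sum_{W\subseteq V(G)}X_{G[W]}\otimes X_{G[V(G)\setminus W]}$, is genuinely valid: your sign-and-bijection argument from \eqref{eq2} works (every edge of $A$ lies inside a component of $(V,A)$, hence inside $W$ or inside $V\setminus W$, so the signs factor and $(A,S)\leftrightarrow(W,A_1,A_2)$ is a bijection), and there is an even shorter proof via two alphabets: identifying $\Delta f$ with $f(x_1,y_1,x_2,y_2,\dots)$, every proper coloring of $G$ from the union of two disjoint alphabets splits uniquely into a proper $x$-coloring of $G[W]$ and a proper $y$-coloring of $G[V\setminus W]$, where $W$ is the set of vertices receiving $x$-colors, crossing edges being automatically proper. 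The rest of your pipeline is sound: bidegree projection isolates $|W|=k$ by homogeneity; applying $\Theta_i$ to a $k$-vertex forest with $a$ components gives $(-1)^{k-i}\binom{k-a}{i-a}$, which is exactly the forest-coefficient formula recorded in Section~2 of the paper; the Kronecker product of the two triangular matrices is invertible, so the joint counts $M_k(a,b)$ are recovered; and for trees $e(W)=|W|-c(W)$ and $d(W)=c(W)+c(V\setminus W)-1$, so the multiset of triples $(|W|,c(W),c(V\setminus W))$ is equivalent to the generalized degree sequence. The comparison with the paper is instructive. The paper's engine is Lemma~\ref{prop3}, Stanley's formula for $\partial/\partial p_j$, which by its very statement sums only over connected $j$-vertex induced subgraphs; this is precisely why the paper's method stalls at subtrees. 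Your coproduct carries the same information as the full family of iterated skew operators $p_\lambda^{\perp}$ (equivalently, all mixed partials $\partial^{\,l}/\partial p_{\lambda_1}\cdots\partial p_{\lambda_l}$, since $\{p_\lambda\}$ and $\{p_\lambda/z_\lambda\}$ are dual bases), and so it sees every vertex subset at once, with the sizes of the components of $T[W]$ and $T[V\setminus W]$ recorded by the partitions. After that, the triangular inversion you perform is the same linear-algebra endgame as in the paper's proof of Theorem~\ref{thm-delete}. In short: same endgame, but a strictly stronger front end; your result subsumes Theorem~\ref{thm-delete} (restrict to $c(W)=1$) and settles the conjecture itself, so it deserves a careful full write-up of the coproduct identity and the inversion rather than the sketch level it is at now.
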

		
		We will show that $X_T$ determines the multiset $\{(|H|,d(V(H))): H \mbox{ a subtree of } G\}$. 
		Define 
		$$F_T=F_T(x,y)=\sum_{ H}x^{|H|}y^{d(V(H))},$$
where the summation is taken over all subtrees $H$ of $T$.  Note that, for any subtree $H$ of $T$,
$d(V(H))$ is the number of components of $T-V(H)$.

		Let $f_{T}(i,j)$ be  the coefficient of $x^iy^j$ in the polynomial $F_T(x,y)$.
                Then $f_T(1,j)$ is the number of vertices of degree $j$ in $T$. Recall that $X_T=\sum_{\lambda\vdash n}c_{\lambda}(T)p_{\lambda}$.  We will show that $$f_T(i,j) =\sum_{\lambda\vdash n}\sigma(\lambda,i,j)c_\lambda(T),$$
                where, for a partition $\lambda=(\lambda_1,\dots,\lambda_l)\vdash n$ and integers $i,j$, 
		\begin{equation}\label{eq-sigma}
		\sigma(\lambda,i,j)
		=(-1)^{n-j-1}
		\binom{n-i-l(\lambda)+1}{j-l(\lambda)+1}
		\left(\sum_{s=1}^{l(\lambda)}
		\delta(\lambda_s,i)\right),			
	\end{equation}
		and  $\delta(j,i)=1$ (when $j=i$) or $\delta(j,i)=0$ (when $j\ne i$). Note that $\sum_{s=1}^{l(\lambda)}		\delta(\lambda_s,i)$ is the number of parts of size $i$ in the partion $\lambda$. 
		
		\begin{thm}\label{thm-delete}
			For every tree $T$ of order $n$,
			$\displaystyle F_T(x,y)=\sum_{i=1}^n\sum_{j=0}^{n-i}x^iy^j\sum_{\lambda\vdash n}\sigma(\lambda,i,j)c_\lambda(T).
			$
		\end{thm}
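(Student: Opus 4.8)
The plan is to prove the pointwise identity $f_T(i,j)=\sum_{\lambda\vdash n}\sigma(\lambda,i,j)c_\lambda(T)$ for every $i\in[n]$ and $0\le j\le n-i$, where $f_T(i,j)$ is the number of subtrees $H$ of $T$ with $|H|=i$ and $d(V(H))=j$; the asserted formula for $F_T(x,y)$ then follows immediately by multiplying by $x^iy^j$ and summing. So the whole proof reduces to understanding the single-monomial sum $\sum_\lambda\sigma(\lambda,i,j)c_\lambda(T)$.

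First I would rewrite the right-hand side using the coefficient formula for forests, $c_\lambda(T)=(-1)^{n-l(\lambda)}N_\lambda$ with $N_\lambda=|\{A\subseteq E(T):\pi(A)=\lambda\}|$. Since $(V(T),A)$ is a spanning forest of $T$ with $l(\pi(A))=n-|A|$ components, I can re-index the sum over partitions $\lambda$ as a sum over edge subsets $A\subseteq E(T)$. The factor $\sum_s\delta(\lambda_s,i)$ becomes the number $m_i(A)$ of components of $(V(T),A)$ with exactly $i$ vertices, and the two signs combine cleanly via $(-1)^{n-j-1}(-1)^{n-l(\lambda)}=(-1)^{j+l(\lambda)+1}$. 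Writing $c(A)$ for the number of components of $(V(T),A)$, this turns the target into $\sum_{A\subseteq E(T)}(-1)^{j+c(A)+1}\binom{n-i-c(A)+1}{j-c(A)+1}m_i(A)$.

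The crucial structural observation is that, because $A\subseteq E(T)$ and $T$ is a tree, every component of $(V(T),A)$ is itself a subtree of $T$: a component of size $i$ is exactly a subtree $H$ with $|H|=i$ whose $i-1$ internal edges all lie in $A$ while none of its $d(V(H))$ boundary edges do. I would therefore expand the weighted count $\sum_A\cdots m_i(A)$ as a sum over pairs $(H,A')$, where $H$ ranges over all size-$i$ subtrees and $A'$ is an arbitrary subset of the $n-i-d(V(H))$ edges of $T$ lying entirely outside $V(H)$; for such $A$ one has $c(A)=n-i+1-|A'|$. Setting $b:=d(V(H))$ and $a:=|A'|$ and simplifying the two binomial arguments (the top reduces to $a$ and the bottom to $\binom{a}{a+i+j-n}=\binom{a}{n-i-j}$), the total contribution of a single subtree $H$ collapses to $(-1)^{j+n-i}\sum_{a=0}^{\,n-i-b}\binom{n-i-b}{a}(-1)^a\binom{a}{n-i-j}$.

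The main work, and the step I expect to be the real obstacle, is evaluating this alternating binomial sum and verifying that it equals the indicator $[\,b=j\,]$. I would apply the subset-of-a-subset identity $\binom{M}{a}\binom{a}{k}=\binom{M}{k}\binom{M-k}{a-k}$ with $M=n-i-b$ and $k=n-i-j$, then invoke the binomial theorem $\sum_t\binom{M-k}{t}(-1)^t=(1-1)^{M-k}$, so the inner sum vanishes unless $M=k$, i.e.\ unless $b=j$, in which case it equals $(-1)^{n-i-j}$. The leftover sign $(-1)^{j+n-i}(-1)^{n-i-j}=(-1)^{2(n-i)}=1$ then leaves exactly $[\,b=j\,]$, so summing over all size-$i$ subtrees $H$ yields precisely $f_T(i,j)$. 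Finally I would sanity-check the boundary cases (notably $i=n$, where $H=T$ and $j=0$) and confirm the admissible ranges of $j$, to be certain that no term is miscounted and that the signs are tracked correctly throughout.
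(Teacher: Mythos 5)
Your proof is correct, and it takes a genuinely different route from the paper's. The paper proves the identity by computing $\partial X_T/\partial p_i$ in two ways: via Stanley's differentiation formula (Lemma \ref{prop3}) it equals $(-1)^{i-1}\sum_H X_{T-V(H)}$ over $i$-vertex subtrees $H$, and the length-$k$ coefficient sums of each forest $X_{T-V(H)}$ are known binomials in terms of the number $j$ of components of $T-V(H)$; this produces a triangular linear relation $D_k=A_{k,k}F_k$ between the quantities $d_{i,k}$ (readable from the $c_\lambda(T)$) and the $f_T(i,j)$, which the paper inverts using the involution $A_{k,k}^2=I$. You instead bypass the differentiation machinery entirely: you expand $c_\lambda(T)=(-1)^{n-l(\lambda)}|\{A\subseteq E(T):\pi(A)=\lambda\}|$ (the forest-specific consequence of Stanley's formula \eqref{eq2} stated in Section 2), re-index the sum over partitions as a sum over edge subsets $A$, exchange summation to pairs $(H,A')$ with $H$ an $i$-vertex subtree and $A'$ a free subset of the $n-i-d(V(H))$ edges disjoint from $V(H)$, and collapse the resulting alternating binomial sum to the indicator $[\,d(V(H))=j\,]$ via the subset-of-a-subset identity and $(1-1)^{M-k}$. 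Your structural observation that components of $(V(T),A)$ are exactly subtrees of $T$ with all internal and no boundary edges in $A$ is sound (connected subgraphs of a tree are induced), the bookkeeping $c(A)=n-i+1-|A'|$ and the binomial simplifications check out, and the sign cancellation $(-1)^{j+n-i}(-1)^{n-i-j}=1$ is right. What each approach buys: the paper's argument explains \emph{where} $\sigma(\lambda,i,j)$ comes from (it is the explicit inverse of the triangular binomial relation, so it would let one discover the formula, not just verify it), while yours is a self-contained, purely combinatorial verification that needs neither Lemma \ref{prop3} nor the matrix inversion — essentially the same binomial orthogonality appears in both, but in yours it is applied pointwise inside the sum rather than as a global change of basis.
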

		\begin{proof}
			Applying Lemma \ref{prop3}, we have that for every $i\le|V(T)|$,
			\begin{equation}\label{eq23}
				\frac{\partial X_T}{\partial p_{i}}=
				(-1)^{i-1}
				\sum_{H} X_{T-V(H)}		
			\end{equation}
			where the summation is taken over all $i$-vertex subtrees $H$ of $T$. 
			Expand each $X_{T-V(H)}$ of $\partial X_T/\partial p_i$  in the basis
			$\{p_{\lambda^\prime}\}$ of $\varLambda_{n-i}$, and  let $d_{i,k}$ denote the sum of coefficients of $p_{\lambda^\prime}$ with $l(\lambda^\prime)=k$ in $(-1)^{i-1}X_{T-V(H)}$ for all $i$-vertex trees $H$ in $T$.          Since $\sum_{s=1}^{k+1}\delta(\lambda_s,i)$ is the number of parts of size $i$ in $\lambda=(\lambda_1,\dots,\lambda_{k+1})$, we can express $d_{i,k}$ with $c_\lambda(T)$: $$d_{i,k}=\sum_{\lambda=(\lambda_1,\cdots,\lambda_{k+1}) \vdash n
	}
	\left(\sum_{s=1}^{k+1}\delta(\lambda_s,i)\right)c_{\lambda}(T).$$

        Recall that $f_{T}(i,j)$, the coefficient of $x^iy^j$ in the polynomial $F_T(x,y)$, is the number of $i$-vertex subtrees of $T$ such that $T-V(H)$ has exactly $j$ components.
				For every $i$-vertex subtree $H$ of $T$ such that $T-V(H)$ has exactly $j$ components, if $k\ge j$ (respectively, $k<j$), then  $(-1)^{n-i-k}\binom{n-i-j}{k-j}$ (respectively, 0) is the sum of coefficients of $p_{\lambda^\prime}$
			with $l(\lambda^\prime)=k$ in $X_{T-V(H)}$ (see Section 2). 
			Thus, by \eqref{eq23} for every $k\ge 1$,
			\[d_{i,k}=(-1)^{(i-1)+(n-i-k)}\sum_{j=1}^{k} \binom{n-i-j}{k -j}f_T(i,j).
			\]
                        Observe that  this expression is a recursive relation about $d_{i,k}$ and $f_T(i,j)$ with respect indices $k$ and $j$. 
                        Let $A_{k,k}=(a_{m,j})$ be a $k\times k$-matrix
                        with $a_{m,j}=(-1)^{n-m-1}\binom{n-i-j}{m-j}$ for $m\ge j$, and $a_{m,j}=0$ for $m<j$. Then 
                         the above recursive relation can be written as  $D_k=A_{k,k}F_k$,  with column vectors $D_k=(d_{i,m})_{1\le m\le k}$ and $F_k=(f_T(i,j))_{1\le j\le k}$. One can verify that for any integer $k$, $A_{k,k}^2=I$, the $k\times k$ identity matrix, so we obtain $F_k=A_{k,k}D_k$, which implies
			\begin{align*}
			f_T(i,j)=(-1)^{n-j-1}\sum_{k=1}^{j}\binom{n-i-k}{j-k}d_{i,k}.
		\end{align*}
Therefore, 
	\begin{align*}
		f_T(i,j)&=
				(-1)^{n-j-1}\sum_{k=1}^{j}\binom{n-i-k}{j-k}\left(
				\sum_{\lambda=(\lambda_1,\cdots,\lambda_{k+1}) \vdash n
				}
			       \left(\sum_{s=1}^{k+1}\delta(\lambda_s,i)\right)c_{\lambda}(T)\right)\\
				&=(-1)^{n-j-1}
				\sum_{\lambda\vdash n}
				\binom{n-i-(l(\lambda)-1)}{j-(l(\lambda)-1)}
				\left(\sum_{s=1}^{l(\lambda)}
				\delta(\lambda_s,i)\right)c_{\lambda}(T)\\
				&=\sum_{\lambda\vdash n}\sigma(\lambda,i,j)c_\lambda(T),
        \end{align*}
        completing the proof.
		\end{proof}

		Crew \cite[Lemma 26]{Crew2020-1}  proved that for a tree $T$, both
		the order of its trunk and the twig sequence are  determined by
		$F_T$,
		and
		thus also by $X_T$ by applying  Theorem  \ref{thm-delete}.
		Therefore, Theorem \ref{thm-delete} gives another proof of Lemma \ref{lem-trunk}
		via the polynomial $F_T$,
		instead of the subtree polynomial $S_T$.
		
		Similar to a statement  in \cite[Page 246]{Martin2008} about the subtree polynomial $S_T$ (see Section 2), we obtain a formula for $F_T$ in terms of the usual scalar product $\langle\cdot,\cdot\rangle$ on $\varLambda_n$ (see \cite[\S7.9]{Stanley1999}), where $n=|T|$.  Define
		$$\Omega_n(x,y)=\sum_{i=1}^n\sum_{j=0}^{n-i}x^iy^j\sum_{\lambda\vdash n}\sigma(\lambda,i,j)\frac{p_\lambda}{\langle p_\lambda,p_\lambda\rangle}.$$
		Then Theorem \ref{thm-delete} is equivalent to the statement:  For every tree $T$ of order $n$,
		$F_T(x,y)=\langle \Omega_n(x,y),X_T\rangle.$

		
		

                \section*{Acknowledgements}
		The authors are grateful to the referees  for their valuable comments and suggestions. In  particular,  we thank the referee who suggested the possibility of  an explicit linear transformation from $X_T$ to $F_T$, resulting in the current form of Theorem \ref{thm-delete}. We also thank the referees and Sagar Sawant  for  pointing  out that Lemma \ref{thm-forest} is a corollary of \cite[Corollary 2.4]{Tsujie2018}.

	\end{document}